\theoremstyle{plain}
\newtheorem{theorem}{Theorem}
\newtheorem*{Rtheorem}{Theorem}
\newtheorem{lemma}[theorem]{Lemma}
\newtheorem{proposition}[theorem]{Proposition}
\theoremstyle{definition}
\theoremstyle{remark}
\newtheorem{remark}[theorem]{Remark}
\newenvironment{proof-sketch}{\noindent{\bf Sketch of Proof}\hspace*{1em}}{\qed\bigskip}
\newcommand{\RR}{\mathbb{R}}
\newcommand{\nn}{\mathbb{N}}
\newcommand{\dd}{\mathrm{d}}
\newcommand{\Om}{\Omega}
\DeclareMathOperator*{\argmax}{arg\,max}
\DeclareMathOperator*{\argmin}{arg\,min}
\newcommand{\odiv}{\operatorname{div}}
\newcommand{\K}{\mathcal K}
\newcommand{\F}{\mathcal F}
\newcommand{\CC}{\mathcal C}
\newcommand{\Wass}{\mathds{W}}
\newcommand{\kk}{\mathsf{k}}
\newcommand{\G}{\mathcal G}
\newcommand{\N}{\mathcal N}
\newcommand{\Lip}{\textup{Lip}}
\newcommand{\M}{\textup{(M)}}
\newcommand{\D}{\mathcal D}
\newcommand{\dist}{\mathbf{D}}
\newcommand{\A}{\mathcal A}
\newcommand{\B}{\mathcal B}
\newcommand{\id}{\mathrm{id}}
\newcommand{\etal}{\textit{et al}., }
\newcommand{\ie}{i.e.,\,}
\newcommand{\eg}{e.g.,\,}
\newcommand{\cf}{c.f.\,}
\newcommand{\prox}{\textbf{Prox}}
\newcommand{\proj}{\textbf{Proj}}
\newcommand{\p}{\textbf{p}}
\newcommand{\x}{\textbf{x}}
\newcommand{\n}{\textbf{n}}
\newcommand{\bnu}{\boldsymbol{\nu}}
\newcommand\restr[2]{{
		\left.\kern-\nulldelimiterspace 
		#1 
		\vphantom{\big|} 
		\right|_{#2} 
}}
\def\1{\raisebox{2pt}{\rm{$\chi$}}}
\newcommand{\res}        {\!\!\mathop{\hbox{
			\vrule height 7pt width .5pt depth 0pt
			\vrule height .5pt width 6pt depth 0pt}}
	\nolimits}
\DeclareMathOperator{\dive}{div}
\newcommand{\I}{\mathbb{I}}
\newcommand{\R}{\mathbb{R}}
\newcommand{\cs}{$^\dagger$} \newcommand{\cm}{$^\ddagger$}
\title{Prediction-Correction Pedestrian Flow by Means of Minimum Flow Problem}
\author[H.  Ennaji]{Hamza Ennaji \cs}\thanks{\cs Normandie Univ, UNICAEN, ENSICAEN, CNRS, GREYC, France. Email : hamza.ennaji@unicaen.fr}
\author[N. Igbida]{Noureddine Igbida\cm}
\author[G. Jradi]{Ghadir Jradi\cm}\thanks{\cm Institut de recherche XLIM-DMI, UMR-CNRS 6172, Facult\'e des Sciences et Techniques, Universit\'e de Limoges, France. Emails:   noureddine.igbida@unilim.fr,  ghadir.jradi@unilim.fr }
\date{\today}
\begin{document}
	
	\begin{abstract}
		We study a new variant of  mathematical prediction-correction model for crowd motion. The prediction phase is handled by a transport equation where the vector field is computed via an eikonal equation $\Vert \nabla\varphi\Vert=f$, with a positive continuous function $f$ connected to the speed of the spontaneous travel.  The correction phase is handled by a new version of the minimum flow problem. This model is flexible and can take into account different types of interactions between the agents, from gradient flow in Wassersetin space to granular type dynamics like in sandpile.  	Furthermore, different boundary conditions can be used, such as non-homogeneous Dirichlet (\eg outings with different exit-cost penalty) and Neumann boundary conditions (\eg  entrances with different rates).  Combining finite volume method for the transport equation and Chambolle-Pock's primal dual algorithm for the eikonal equation and minimum flow problem, we present numerical simulations to demonstrate the behavior in different scenarios.
	\end{abstract}
	
	\keywords{ Mathematical prediction-correction model, crowd motion, transport equation, eikonal equation, minimum flow problem, granular type dynamics, numerical simulation, duality in optimization, primal-dual  algorithm. }
	\maketitle

\section{Introduction}

Macroscopic model for a congested   pedestrian flow involves treating the crowd as a whole and  is applicable  for  large crowds. It was first introduced in \cite{Bord}  and developed in \cite{Helbing1,Helbing2}. In these models, the crowd  behave similarly to a  moving fluid in a spatio-temporal dynamic governed by a  flow velocity  vector field $U$.  
Thus the master equation of  each macroscopic crowd flows model is the continuity equation:
\begin{equation}\label{transport1}
	\partial_t \rho +\dive(\rho\: U) =0,
\end{equation}
where $ \rho=\rho(t,x) $  the density of the individuals, at time $t\geq 0$ and at the position $x\in \RR^N$ ($N=2$), needs to
accurate some admissible global distribution of the population. Though there is much speculation, discussion and experience to define  appropriate  choice of  flow velocity  vector field $U,$
there is no  definitive universal choice to describe  crowed motion in general.
The main difficulties lies in the fact that while maintaining a suitable dynamic esteeming  the admissible global distribution $\rho$, $U$ needs to manage both, the overall behavior of the crowd (for example of reaching an objective like exit, point of interest, avoidance of danger, etc.) and certain local behavior of pedestrians (pedestrian in a hurry, pedestrian who adapts their speed, pedestrian who avoids the crowd, pedestrian attracted by the crowd, etc).

Inspired by traffic flow models, many crowd motion models were performed essentially in one-dimensional space  (\cf \cite{Colombo&Rosini,Helbing1,Helbing2}).  In higher dimensions, Bellomo and Dogbe (\cf \cite{Bellomo&Dogbe,Dogbe}) proposed coupling the continuity equation with 
$$\partial_t U +(U\cdot \nabla_x)U =F(\rho,U),   $$
where the   motion is governed by $F$, which has two parts: a relaxation term towards a definite speed, and a repulsive term to take into account that pedestrians tend to avoid high-density areas. A barrier method was proposed by Degond (cf. \cite{Degond&al}) wherein the motion $F$ depends on a pressure that blows up when the density approaches a given congestion density.  Piccoli and Tosin proposed another class of models in the framework of a time-evolving measure in \cite{Piccoli&Tosin1,Piccoli&Tosin2}.  In their model
the velocity of the pedestrian is composed by two terms: a desired velocity and an	interaction velocity.

Roger Hughes proposed a completely different approach to describing pedestrian dynamics in \cite{Hughes1}, where a group of people wants to leave a domain with one or more exits/doors as quickly as possible. His main idea was to include some kind of saturation effects in the vector field.  He considered   $U=U[\rho]$ driven by the gradient of  a potential $\Phi$ and weighted by a nonlinear mobility  $f=f(\rho)$. More precisely
\[
U=f(\rho)^2\nabla \Phi \quad \hbox{ and } \Vert \nabla \Phi\Vert =1/f(\rho),
\]
where mobility includes saturation effects, \ie degenerate behavior when approaching a given maximum density  $\rho_{max}$ (assumed to be known); for instance one can take $f(\rho)=(\rho-\rho_{max})^2$ among others.   See also \cite{Coscia&Canavesio} and \cite{Hughes2} for  further details.

To handle  local behaviors of pedestrian,   we go here with second order PDE for crowed motion  to perform congestion phenomena which may appear  if one consider velocity field  $U$    looking out solely to the exists (doors).  The main idea is to get in  $U$ together a   vector field  $V$ with an overview   looking out   to the exit
and some kind of patch  $W$, a vector filed with a local view  looking out to the allowable neighbor positions   taking into account the local distribution of the pedestrian.  To come out with  $U$ through this perspective, we process by splitting the dynamic into two instantaneous phases: a first one, the so-called  prediction phase, where the pedestrians   move 	along the given  vector filed $V,$  the so called spontaneous velocity field,  and a second phase, the correction,   which generates a patch  $W$  that enables the pedestrian to move 	along allowable local paths to avoid congestion and maintain  admissible global distribution of the pedestrian.  	A typical example of this point of view remains to be   the 	constrained diffusion-transport equation  which was performed   in the pioneering work  by B.  Maury and al.   (cf.  \cite{MRS1}) through a predicting-correcting algorithm using a gradient flow in the Wasserstein space of probability measures.   In this paper, we use a new manner  to  handle this perspective.  In contrast with \cite{MRS1} where the author straighten up the density using some kind of projection in $\Wass_2-$Wasserstein space in the correction phase,  our approach   is based on a new version of minimum flow problem.   The approach  is  flexible and makes it possible to integrate several scenarios to deal with congestion. One can see also \cite{IgCrowd} where the approach is used to  study similar  dynamic in the case of two populations. In particular it allows to retrieve and compute otherwise  the typical  model of B. Maury, where  the patch $W=W[\rho]$ is traced strictly in the so called congested/saturated  regions as follow
\begin{equation}
	W[\rho] =- \nabla p, \hbox{ with } p\geq 0 \hbox{ and } p(\rho-1)=0.
\end{equation} 	
Here $\rho\equiv 1$ workouts the utmost distribution of the population in $\Omega.$ Therefore,  via this approach, the   proposed system
 reads \begin{equation}\label{Maury1}
	\left\{
	\begin{array}{ll}
		\displaystyle \frac{\partial \rho }{\partial t}  +\dive(  \rho  \: (V- \nabla p)   )=  0\\  \\
		p\geq 0,\: 0\leq \rho\leq 1,\: p(\rho-1)=0.
	\end{array}
	\right.
\end{equation}
Furthermore, the  approach  enables  to built   a new model based on granular dynamic like  in  sandpile, presuming that  individuals behave like grains in the congested zones.   	In some sense, at the microscopic level,  the individuals travel    by accruing randomly  to the crowed, being placed either upon a heretofore unoccupied position in the direction of the exit  or else upon the top of the stack of the crowd.  Moreover, the local movement of the  individuals may be weighted by a given function $\kk$ connected to the speed of the  spontaneous local movement.  In this case,  we prove that the patch is given by 
\begin{equation}
	W[\rho] =- m\nabla p 
\end{equation} 
with unknown $m$ and $p$ satisfying  
\begin{equation}
	m\geq 0,\: p\geq 0,\: \vert \nabla p\vert \leq \kk,\: p(\rho-1)\hbox{ and } m\:(\vert \nabla p\vert -\kk)=0 .
\end{equation}  	 
Here  $m\geq 0$ is Lagrange multiplayer associated with the additional constraint $\vert \nabla p\vert \leq \kk.$ 	 The approach enables also to handle and integrate  different boundary conditions.  Neumann boundary condition is   connected to the crossing  boundary amount, and Dirichlet  is connected to the possibility of crossing some parts of the boundary with different charges.

After all,  via this approach, we introduce a  new model  of granular type : 
\begin{equation} \label{evolgran0}
	\left\{
	\begin{array}{ll}
		\displaystyle \frac{\partial \rho }{\partial t}  +\dive(  \rho  \: (V- m \nabla p)   )=  0\\  \\
		0\leq \rho\leq 1,\: 	p\geq 0,\:  \vert \nabla p\vert \leq \kk \\  \\ 
		p(\rho-1)=0,\:  m(\vert \nabla p\vert -\kk)=0,
	\end{array}
	\right.
\end{equation}
subject to mixed boundary conditions (not necessary homogeneous), to describe a crowd motion where the movement of the agent is of   granular type like in sandpile.    In this paper, 
we   propose  its numerical study based on  a new manner  to  handle the predicting-correcting algorithm to build the patch $W$.  Over and above the transport equation \eqref{transport1},  we  proceed using as well a new version of minimum flow problem  for optimal assignation  as a step in the process to find the right assignment of the pedestrian.  Roughly speaking, in the correction step we  put together tow nested optimization  procedures:  a computation of   a  minimum flow  with  gainful assignment  towards a specific part of  the boundary  (towards the exit) for arbitrary target, and then a coming up with the right target among all admissible ones.   We show how one can retrieve and compute otherwise  the typical  model of B.Maury \etal  (\cf  \cite{MRS1})  that we call up above. Then, we focus on the new model based on granular dynamics-like for sandpile.

The theoretical study of \eqref{evolgran0} is a challenging problem, especially   existence and uniqueness questions, that we'll treat likely in forthcoming works.  Recall that, the case where the PDE is of diffusive type like in \eqref{Maury1}, the model is very employed to describe the behavior of population subject to global behavior governed by a vector field $V$    and a local one governed by the patch $W[\rho] $  (\cf \cite{MRS1,MRS2,MRSV,MS} and the references therein). The uniqueness of a solution is a   hard issue for these kind of problems that was treated recently by the second author  in \cite{Igshuniq} (see also \cite{DM,Noemi&Markus}).

\subsection*{Organization of  the paper.} 
This paper is organized as follows. In Section-\ref{section:model} we present our model, we give the details of each of its steps and we discuss two peculiar  related PDEs to this model as well as some duality results on which our algorithm reposes. In Section-\ref{section:numerics}, we show how to discretize the model. 	 Since the approximation of the continuity equation is more or less classical, the novelty will be the use of a primal-dual method to solve the Beckmann-like problem.  More particularly, this is given in Algorithm-\ref{alg:pd}. In Section-\ref{section:examples} we given several examples to illustrate our approach and we compare with some related works. Finally, we recall some tools and give some technical proofs in the Appendix.

\section{The model }\label{section:model}

We  consider  an exit scenario, where $\Omega\subset\RR^N$ ($N=2$)  is a bounded   open set with regular boundary $\displaystyle \partial\Omega =\Gamma_N\cup \Gamma_D.$
The set $\Omega$ represents    the region where the crowd is moving, $\Gamma_N$  represents the (impenetrable) walls and $\Gamma_D$ the exits/doors.  

\subsection{Minimum flow problem}

The key idea concerning  the  minimum flow problem goes back to Beckmann \cite{Beckmann}.  It consists in finding the optimal  traffic flow field $\Phi$  between the two distributions  given by $\mu_1$ and $\mu_2.$ That is to find  the vector field $\Phi$ which satisfies  
the divergence equation
\begin{equation}\label{baleq1}
	- \dive(\Phi)  =\mu_1-\mu_2 \hbox{ in }  \overline \Omega,
\end{equation}
and minimize a total cost of   the traffic $\int F(x, \Phi(x) )\:\dd x,$ where    
$F\: :\:  \Omega\times \RR^N\to \RR^+$ is a given function assumed to be at least continuous and convex with respect to the second variable.  
The  equation \eqref{baleq1} needs to be understood in the sense of $\D'(\overline \Omega).$ In particular, the equation assigns a fixed normal trace to $\Phi$ on $\partial \Omega$ which is connected to the formal values of $\mu_1-\mu_2$  on $\partial \Omega.$  

Here, we use a new variant  to handle the pedestrian flow  and carried out the   patch $W$ for the  spontaneous velocity field  when the pedestrian is  hindered by the other one.   Indeed, we   work with a modified  traffic cost which   handles   some kind of   gainful assignment  towards a specific part of  the boundary $\Gamma_D.$  More precisely, we   
consider the following momentum cost  of the traffic
\[
\mathcal M (\Phi):=  \int_\Omega F(x,  \Phi  (x) )\: \dd x -  \int_{\Gamma_D} g(x)\: \Phi\cdot \bnu\:\dd x ,
\]
where $\Phi\cdot\bnu$ denotes the normal trace of $\Phi$  and   $g$ patterns a given  gainful charge for the  assignment  towards  $\Gamma_D.$       Of course, for the optimization problem we have in mind we need to keep unrestricted  the normal trace of $\Phi$ on $\Gamma_D$.   Thus,   the balance equation \eqref{baleq1}  turns into 
\begin{equation}
	\label{baleq2}
	- \dive(\Phi)  =\mu_1-\mu_2    \hbox{ in }    \D'(\overline \Omega\setminus \Gamma_D) .  
\end{equation}
See here, that the normal trace of $\Phi$ on the odd  part  $\Gamma_N$ remains to be  given by   $\mu_1-\mu_2$ on $\Gamma_N.$ For instance, working with $\mu_1$ and $\mu_2$ supported in $\Omega,$  we  keep unrestricted  the normal trace of $\Phi$ on $\Gamma_D$ but assigned it to $0$ on $\Gamma_N.$

This being said, we consider the transportation cost associated with  given densities $\mu_1$ and $\mu_2$  to be
\begin{equation}\label{cfmu}
	\begin{array}{l}
		\inf_{\Phi}\Big \{  \int_\Omega F(x,  \Phi  (x) )\: \dd x -  \int_{\Gamma_D} g(x)\: \Phi\cdot \bnu \:     \dd x \: :\:    \:      - \dive(\Phi)  =\mu_1-\mu_2    \hbox{ in }    \D'(\overline \Omega\setminus \Gamma_D)     \Big\}.
	\end{array}
\end{equation}
Actually,  for any arbitrary distributions $\mu_1$ and $\mu_2,$  the optimization problem   \eqref{cfmu} aims to minimize  both  the transportation between $\mu_1$ and $\mu_2,$ in $\Omega$ and towards $\Gamma_D,$   by means of the cost function $F $ in $\Omega$, as well as the transportation towards the boundary $\Gamma_D$ paying the gainful charge $g(x)$ for each target position  $x\in \Gamma_D,$ respectively.    Moreover,  the new formulation enables to handle as well    a provided incoming (or outgoing) flux on the remaining part $\Gamma_N.$ 

Notice here, that one needs to be careful   with the notion of trace of $\Phi$ on the boundary since it is not well defined for all $\Phi$. 
One needs to be careful here with the notion of trace of $\Phi$ on the boundary since it is not well defined for all $\Phi$. However, working in
\[
H_{\mathrm{div}} :=\Big\{ \Phi\in L^2(\Omega):~ \dive(\Phi)\in L^2(\Omega) \Big\},
\]
enables us to define $\Phi\cdot \bnu$ on $\Gamma_D$  in the right sense. Indeed, let  $\gamma_{0}: H^{1}(\Omega)\to L^{2}(\Gamma)$   be the linear and continuous mapping  satisfying  $\gamma_{0}(u) = u_{|\Gamma}$ for all $u\in  C(\overline \Omega)$, where $\Gamma=\partial \Omega.$ Then, defining $H^{1/2}(\Gamma) = \gamma_{0}(H^{1}(\overline \Omega))$ and $H^{-1/2}(\Gamma)$ its dual, there exists a continuous trace operator $\gamma_{\n}: H_{\mathrm{div}}(\Omega) \to H^{-1/2}(\Gamma)$ such that $\gamma_{\bnu}(\Phi) = \Phi\cdot\bnu$ for any $\Phi\in \D(\overline \Omega )^N$.  Thanks to Gauss's Theorem, we have
$$\langle  \gamma_{\bnu}(\Phi),\gamma_{0}(u)\rangle_{H^{-1/2},H^{1/2}}= \int_{\Omega}\Phi\cdot \nabla u \: \dd x +\int_{\Omega} u \: \dive(\Phi)\:  \dd x \:\: \text{ for all }\:\:  u\in H^1(\Omega), \Phi\in H_{\mathrm{div}}(\Omega).
$$
To simplify the presentation, we denote $\Phi\cdot\bnu :=  \gamma_{\bnu}(\Phi),$ and moreover
\[
\int_{\Gamma_D} g(x)\: \Phi\cdot\bnu\:  \dd x := \langle \gamma_{\bnu}(\Phi), \tilde g\rangle_{H^{-1/2},H^{1/2}},
\]
where $\tilde g\chi_{\Gamma_D} =g $ for $\tilde g\in H^{1/2}.$  Yet, one needs to assume that such $\tilde g$ exists (see the assumptions in Section \ref{subsection:pde}).

\bigskip 
Before ending this section,  let us recall that a  similar problem to \eqref{cfmu} appears in  \cite{EINHJ}  in the study of Hamilton-Jacobi equation (see also \cite{EINsfs} and \cite{EINfinsler}). It appears also on a different form  in the study of some Sobolev regularity for degenerate elliptic PDEs in \cite{Sanatambrogio-duality}.   Indeed, to avoid   technicalities related to the normal trace of the flux on the boundary,  it  is possible to rewrite  \eqref{cfmu} as follows 
\begin{equation}   \label{cfmuS}
	\begin{array}{c}
		\inf_{\Phi,\upsilon}\left \{  \int_\Omega F(x,  \Phi  (x) )\: \dd x -\int_{\Gamma_D} g(x)\: \upsilon (x) \:     \dd x   \: :\:    \:   	- \dive(\Phi)  =\mu_1-\mu_2    \hbox{ in }    \D'(\overline \Omega\setminus \Gamma_D)  \right.\\  \\ 
		\hbox{ and } \Phi\cdot \bnu =\upsilon \hbox{ on } \Gamma_D   \Big\}.
	\end{array}
\end{equation}

\begin{remark}
	Taking non-homogeneous  boundary data $g$ and $\eta$ enables the treatment of  congestion crowd motion in an urban area with many issues :  incoming issues included in $\Gamma_N$ with  supply  rate given by $\eta$ and outgoing issues in $\Gamma_D$ with some kind of rate of return  pictured  by $g.$ 
	
\end{remark}

	In \cite{Sanatambrogio-duality}, the author studied some particular cases of \eqref{cfmuS} (like for instance the case where $\Gamma_D=\emptyset$ and also $\Gamma_D=\partial \Omega$).    We notice also that in \eqref{cfmu}, the infimum in $\Phi$ is not reached in general and one looks in some situations (like for  homogeneous $F$) for measure flow fields instead. This is related to the question of regularity of the transport density in mass transportation  (see for instance the recent paper \cite{Dweik}   and the references therein).

	\subsection{The algorithm} \label{Salgorithm}

	The main idea of prediction-correction algorithm  is  to split the dynamic into instantaneous successive processes : prediction then correction. The prediction  step aims beforehand    to move   the population  through a spontaneous velocity field.  For this to happen,  we use simply the transport equation \eqref{transport1} with $U=V,$ where $V$ derives from a potential  governed by  fast exit access trajectories.  Afterward, as though the output of the prediction  may  be not feasible, we propose to catch up  the upright deployment by applying  the minimum flow  assignment  process \eqref{cfmu} to the output of the prediction step; that we denote for the moment by $\tilde \rho$ and which should be a priori  an $L^\infty$ function.   
	Moreover, assuming  that the dynamic is subject  to some  supply of population through   incoming issues included in $\Gamma_N,$ we propose to take 
	$$\mu_1= \tilde \rho \res \Omega + \eta \res \Gamma_N,$$  
	where $\eta$ precisely designates the  incoming  supply through $\Gamma_N$. In this case, the constraint 
	$	 -\dive(\Phi)  =\mu_1-\rho    \hbox{ in }   \D'(\overline \Omega\setminus \Gamma_D)  $ is equivalent  to say 
	$$ -\dive(\Phi)  =\tilde \rho -\rho  \quad  \hbox{ in }   \D'(  \Omega )  \quad\hbox{ and } \quad \Phi\cdot \bnu =\eta\quad \hbox{ on }\Gamma_N. $$
	The correction step we propose to construct $\rho$ requires to solve precisely the   following optimization problem 
	\begin{equation}\label{minproc}
		\begin{array}{c}
			\inf_{(\Phi,\rho)}\left \{  \int_\Omega F(x,  \Phi  (x) )\: \dd x -  \int_{\Gamma_D} g(x)\: \Phi\cdot\bnu \:  \dd x   \: :\:    0\leq \rho\leq 1,\:   -\dive(\Phi)  =\tilde \rho -\rho   \hbox{ in }   \D'(  \Omega )   \right.  \\   \\  \hspace*{1cm}       \hbox { and }   \Phi\cdot \bnu =\eta  \hbox{ on }\Gamma_N        \Big\}.
		\end{array}
	\end{equation}
	The right space for each terms in \eqref{minproc} will be given in the following section. See that the output of  the correction step provides as well   the correction associated with  $\tilde \rho$ and the suitable  flow for    the adjustment  of the dynamic.  We will see in the following section  how the optimal flux $\Phi$ enables to  carry  out  the patch  $W$  for the  spontaneous velocity field  when this is necessary.  
	
	\medskip
	So, the algorithm may be written as follows : we consider $T>0$ a given time horizon. For a given time step $\tau >0,$ we consider a uniform partition   of  $[0,T]$ given by
	$t_k=k\tau,$  $k=0,\dotsc,n-1.$ Supposing that we know the density of the population $\rho_k$ at a given  step $k,$  starting by $\rho_0.$ Then, we superimpose successively the following two steps :
	
	\begin{itemize}
		\item   \textbf{Prediction:}
		In this predictive step, the density of population trends to grow up into
		$$ \rho_{k+\frac{1}{2}} =\rho(t_{k+\frac{1}{2}}) ,$$
		where $\rho$ is the solution of the transport equation 
		\begin{equation}
			\label{eq:continuity} 
			\partial_t \rho+\dive(\rho\: V )=0\quad \hbox{ in }[t_k,t_{k+\frac{1}{2}}[,
		\end{equation}
		with  $ \rho(t_k)=\rho_k.$   Here, $V$ is spontaneous velocity field  given by the geodesics toward the  exit.  To built its  corresponding potential $\varphi$, we propose to solve the eikonal equation
		\begin{equation}\label{hjk}
			\left\{ \begin{array}{ll}
				\Vert \nabla \varphi\Vert =f\quad & \hbox{ in }\Omega,\\   \\
				\varphi =0 & \hbox{ on }\Gamma_D,
			\end{array}
			\right.
		\end{equation}
		where $f$ is a given positive continuous function. Then, the spontaneous velocity field  $V$ is given   by
		$V:=-\nabla \varphi/\Vert \nabla \varphi \Vert.$   One sees here that the solution of \eqref{hjk} (in the sense of viscosity) gives  the speedy   path  to the exit $\Gamma_D$. 	The potential $\varphi$ corresponds to the expected travel time to maneuver towards an exit. In particular, $\varphi$ is proportional to $f$ which may template space movement of traffic . As we will see, we can upgrade   the spontaneous velocity field   by taking $f$ depending on  the density on real time (like in Hugue's model).

		\item  \textbf{Correction:} In general it is not expected that $\rho_{k+\frac{1}{2}}$ to be an allowable  density of pedestrian, since this value may evolve outside the interval $[0,1].$
		We propose then to proceed by the Beckmann-like process we introduced above to find the right apportionment of the pedestrian. That is to find $\rho_{k+1}$ using the optimization problem  \eqref{minproc}.   More precisely, we propose to consider $\rho_{k+1}$ given by the following optimization problem
		\begin{equation} \label{argmink}
			\begin{array}{l}
				\argmin_{ \rho } \inf_{\Phi}\left \{    \int_\Omega F(x,  \Phi  (x) )\: \dd x -  \int_{\Gamma_D} g(x)\: \Phi\cdot\bnu \:  \dd x   \: :\: \rho \in L^\infty(\Omega) ,\:  0\leq \rho\leq 1,\:  \right.  \\   \\  \hspace*{1cm}  \Phi\in L^2(\Omega)^N,\:     -\tau\: \dive(\Phi)  =\rho_{k+1/2} - \rho  \hbox{ in }   \D'(  \Omega )    \hbox { and }  \Phi\cdot \bnu = \eta  \hbox{ on }\Gamma_N        \Big\}.
			\end{array}
		\end{equation}

	\end{itemize}

	\subsection{Related  PDE}\label{subsection:pde}
	
	The application    $F:\Omega\times \RR^N \to [0,\infty)$   is assumed firstly to be    continuous.   As a primer practical case,  one can consider the quadratic case, \ie
	$$F(x,\xi)= \frac{1}{2}\vert \xi \vert^2,\quad \hbox{  for any  }x\in \Omega\hbox{ and }\xi\in \RR^N.$$ 
	More sophisticated situations  arise by  considering non-homogeneous $F$  that weights the cost according to  space variables; like for instance 
	\begin{equation}\label{Fs}
		F(x,\xi)= \frac{c(x)}{s} \:  \vert \xi\vert^{s},\quad \hbox{  for any  }x\in \Omega\hbox{ and }\xi\in \RR^N,
	\end{equation} 
	with   $1< s<\infty.$  In particular,  with the parameter $c$  one can scale the cost by focusing on and/or avoiding certain regions in space.  	A formal computation using duality à la Fenchel-Rockafellar (see \eg \cite{Ekeland&Temam}) implies that  the infimum in \eqref{minproc} should coincide with 
	\begin{equation}
		\inf_{p}\left\{ \int p^+(x)\: \dd x+ \frac{1}{s'} \int  c(x)^{s'-1} \:  \vert  \nabla p(x) \vert^{s'} \: \dd x - \int  p(x)\:  \tilde \rho(x)  \: \dd x \: :\:  p\in W^{1,s'}(\Omega) ,\ p=g\hbox{ on }\Gamma_D   \right\},
	\end{equation}
	where $s'$ is the conjugate index of $s$, \ie it satisfies $\frac{1}{s}+\frac{1}{s'} = 1$.
	Moreover, $p$ and $(\rho,\Phi)$  are solutions of  both problems  respectively, if and only if $(p,\rho,\Phi)$ is a solution of the following PDE
	\begin{equation}
		\label{system2}
		\left\{
		\begin{array}{ll}
			\left. 	\begin{array}{l}
				\rho - \dive( \Phi) = \tilde{\rho}  \\   \\
				\rho \in \hbox{Sign}^+(p),\quad k\: \Phi = c^{s'}  \:  \vert \nabla p\vert^{s'-2} \nabla p  
			\end{array} \right\}  \quad & \hbox{ in } \Omega,\\  \\
			\Phi \cdot \bnu =\eta & \hbox{ on }\Gamma_N,\\  \\
			p=g    & \hbox{ on }\Gamma_D,
		\end{array}
		\right.
	\end{equation}
	where $\hbox{Sign}^{+}$ is  the maximal monotone graph given by
	$$
	\operatorname{Sign}^{+}(r)=\left\{\begin{array}{ll}
		1 & \text { for } r>0 \\
		{[0,1]} & \text { for } r=0 \\
		0 & \text { for } r<0
	\end{array} \quad \text { for } r \in\R.\right.
	$$
	In other words, $\rho\in\operatorname{Sign}^{+}(p)$ is equivalent to says that $0\leq \rho\leq 1$ and $p(1-\rho)=0$ in $\Omega.$  In this paper, we focus on the case where $s=1.$ For the treatment of the other cases, one can see \cite{IgCrowd} for more details.   In particular, one sees that the quadratic case is closely connected to  the system \eqref{Maury1} which was proposed by  B.  Maury \etal \cite{MRS1} in the framework of gradient flows in the Wasserstein space of probability measures.    As to the case \eqref{Fs},   dynamical model which comes off following  our approach is given by some kind of  non linear $s'-$Laplace equation 
	\begin{equation}\label{modelsprime}
		\left\{
		\begin{array}{ll}
			\displaystyle \frac{\partial \rho }{\partial t}  +\dive(  \rho  \: (V- W)   )=  0,\quad 	k\: W = c^{s'}  \:    \vert \nabla p\vert^{s'-2} \nabla p\\  \\
			p\geq 0,\: 0\leq \rho\leq 1,\: p(\rho-1)=0
		\end{array}
		\right\} \quad \hbox{ in } (0,\infty)\times \Omega 
	\end{equation}
	subject to boundary conditions 
	\begin{equation}
		\label{mixedBC}
		\left\{
		\begin{array}{ll}
			\Phi \cdot \bnu =\eta \quad & \hbox{ on }(0,\infty)\times \Gamma_N,\\  \\
			p=g    & \hbox{ on }(0,\infty)\times \Gamma_D.
		\end{array}
		\right.
	\end{equation}
	Notice that we use in \eqref{modelsprime}, the fact that  $\nabla p=\rho\: \nabla p,$ which is connected to $\rho\in  \operatorname{Sign}^{+}(p)$.

	As we said above,  we focus here on the case where 
	\begin{equation}\label{grancase}
		F(x,  \xi)=\kk(x)\: \vert \xi\vert  ,\quad \hbox{ for any }(x,\xi)\in \Omega\times \RR^N ,
	\end{equation}  
	where $0\leq \kk\in \mathcal C(\overline \Omega).$   	This  case  
	is closely connected to granular dynamic like sandpile  (see \cite{IgDu} and the references therein).   In other words the individuals behaves like grains of sand (see \cite{EvReza}  and also \cite{Igstoch} for a stochastic microscopic description of the granular dynamic),  in the congestion zone and not like a fluid as follows from the quadratic case.  A peculiar choice may be the same function $f$  
	
	Moreover, in connection  with gradient flow in the Wasserstein space, it is possible to connect our approach (in the case \eqref{grancase}) to the gradient flow in the  Wasserstein space of probability measures equipped with $\Wass_1.$  Indeed, in the case where $V\equiv 0,$ the link is well established at least in some particular case of nonlinearity  connecting $\rho$ to $p$   (cf.  \cite{AgCaIg}).

	\bigskip 
	
	To treat the problem \eqref{minproc},   we assume  that the boundary data $g$ and $\eta$ are such that 
	\begin{itemize}
		\item[(H1): ]  the exists $\tilde g\in  W^{1,\infty}(\Omega),$ such that  
		\begin{equation}\label{dominF*}
			\nabla \tilde g(x)\in	G(x):= \Big\{ \xi\in\RR^N\: :\:   \vert \xi\vert \leq \kk(x)\Big\},\hbox{ a.e. in }\Omega.   
		\end{equation}   and 
		\begin{equation}\label{condg}
			\tilde{g}=g \: \chi_{\Gamma_D}\quad \hbox{ on }\partial\Omega,
		\end{equation}  
		
		\item[(H2): ]
		there   exists $1<s<\infty$   such that 
		$$\eta \in W^{-\frac{1}{s},s}(\Gamma_N).$$   
	\end{itemize}

	Then, for any $\mu\in L^s(\Omega),$  we define 
	$$ \F (\mu):=\Big\{  \Phi\in L^{s}(\Omega)^N \: :\:  -  \dive(\Phi)  = \mu     \hbox{ in }      \Omega    \hbox{ and }    \Phi\cdot \bnu =\eta \hbox{ on } \Gamma_N    \Big\}.  $$ 
	Remind here, that  $ - \dive(\Phi)  = \mu$ in    $  \Omega $ and $  \Phi\cdot \bnu =\eta $ on  $ \Gamma_N $ needs to be understood in the sense that 	
	\begin{equation}
		\int_\Omega \Phi\cdot \nabla \xi \: \dd x = \int_{\Omega }\mu\: \xi\: \dd x + \int_{\Gamma_N} \eta\: \xi\: \dd x, \quad \hbox{ for any }\xi\in W^{1,s'}_{\Gamma_D } (\Omega).
	\end{equation}
	We are interested into the interpretation in terms of PDE of the solution of the problem 
	\begin{equation}  
		\N (\tilde\rho) := 	\inf_{\Phi,\rho}\left \{ \tau\:  \int_\Omega \kk(x)\: \vert \Phi(x)\vert\: \dd x -\tau\:  \int_{\Gamma_D} g\: \Phi\cdot \bnu  \:     \dd x   \: :\: \tau\:  \Phi\in  \F (\tilde \rho-\rho) \hbox{ and }\rho\in \K_1 \right\}, 
	\end{equation} 
	where  $0\leq \tilde\rho\in L^s(\Omega)$ is fixed  and $K_1$ is the set of admissible densities:
	\[
	\K_1 = \{\rho\in L^\infty(\Omega)\: :\: 0\leq \rho\leq 1\hbox{ a.e. in }\Omega  \}.
	\] 
	See that, all the terms in $\N (\tilde\rho) $ are well defined. Indeed, since   $\Phi\in L^s(\Omega)^N$ and $\nabla \cdot \Phi \in L^s(\Omega),$  the normal trace of $\Phi$ is well defined on $\Gamma_D$ and $\Gamma_N.$    Actually $\int_{\Gamma_D} g\: \Phi\cdot \bnu  \:     \dd x $ and $  \Phi\cdot \bnu =\eta \hbox{ on } \Gamma_N$  need  to be understood, respectively,  in the sense of 
	$$   \langle \Phi \cdot \bnu,\tilde g \rangle_{ W^{-1/s,s}(  \Gamma_D),  W^{1-1/s',s'}(  \Gamma)}  $$ 
	and 
	$$ \int_\Omega\Phi \cdot \nabla   \xi  \: \dd x +  \int_\Omega   \xi \: \nabla \cdot \Phi\: \dd x = \langle \eta ,  \xi \rangle_{ W^{-1/s,s}(  \Gamma),  W^{1-1/s',s'}(  \Gamma)} , \quad \hbox{ for any }  \xi \in W^{1,s}_{\Gamma_D}(\Omega). $$

	%
	%
	%
	%
	%

	\medskip\noindent 	
	Our main result here is the following

	\begin{theorem}\label{Tdualhomogeneous}
		For any $0\leq \tilde\rho\in L^s(\Omega)$ , we have  
		\begin{equation}
			\N (\tilde\rho)   = \max_{ p\in \G_\kk }\Big\{ \int_\Omega  \tilde \rho \: p\: \dd x + \tau\: 	 \int_{\Gamma_N} \eta\: p\: \dd x -\int_\Omega p^+\: \dd x   \Big\} :=D_g^\infty(\tilde \rho),
		\end{equation}
		where 
		$$\G_{\kk}:=\Big\{ z\in W^{1,\infty}(\Omega)\: :\:  z=g\hbox{ on }\Gamma_D\hbox{ and } \vert \nabla z(x) \vert \leq \kk(x) \hbox{ a.e. }x\in \Omega \Big\} .$$
		Moreover, 
		\begin{equation} \label{mininf}
			\N (\tilde\rho) = \min_{\rho\in \K_1}   \inf_{\tau \Phi\in \F (\tilde \rho-\rho)}\left \{ \tau  \int_\Omega \kk(x)\: \vert \Phi(x)\vert\: \dd x -\tau \int_{\Gamma_D} g(x)\: \Phi\cdot \bnu \:     \dd x   \right\} ,
		\end{equation} 
		and,   if $\rho$ and $p$ are optimal solutions     of  both problems 	$\N (\tilde\rho) $  and  $D_g^\infty(\tilde \rho)$ respectively, then  	$p\in \G_\kk $, $\rho\in \K_1,$     $\rho\in \hbox{Sign}^+(p),$  a.e. in $\Omega $    and 		 
		\begin{equation} 
			\int_\Omega (\tilde \rho -\rho)\: (p-\xi)\:   \dd x -\tau\: \int_{\Gamma_N} \eta\: (p-\xi)\:  \dd x  \geq 0 ,\quad \hbox{ for any }\xi\in \G_\kk .	\end{equation} 
	\end{theorem}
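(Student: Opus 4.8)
The plan is to prove the identity $\N(\tilde\rho)=D_g^\infty(\tilde\rho)$ by convex (Fenchel--Rockafellar) duality (see \cite{Ekeland&Temam}), and then to extract attainment of the minimum and the optimality system from the resulting saddle-point structure. First I would encode the two defining constraints $-\tau\dive(\Phi)=\tilde\rho-\rho$ in $\Omega$ and $\Phi\cdot\bnu=\eta$ on $\Gamma_N$ (i.e. $\tau\Phi\in\F(\tilde\rho-\rho)$) through a Lagrange multiplier $p$, forming
\[
L(\Phi,\rho,p)=\tau\int_\Omega \kk\,|\Phi|\,\dd x-\tau\int_{\Gamma_D} g\,\Phi\cdot\bnu\,\dd x+\int_\Omega p\,\big(\tilde\rho-\rho+\tau\dive(\Phi)\big)\,\dd x,
\]
so that $\N(\tilde\rho)=\inf_{\rho\in\K_1}\inf_{\Phi}\sup_{p}L$. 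An integration by parts turns $\tau\int_\Omega p\,\dive(\Phi)$ into $-\tau\int_\Omega\Phi\cdot\nabla p$ together with the boundary pairings $\tau\int_{\Gamma_D}(p-g)\,\Phi\cdot\bnu$ and $\tau\int_{\Gamma_N}\eta\,p$; the $H_{\mathrm{div}}$ trace theory recalled above, together with hypothesis (H2), gives these traces a rigorous meaning.

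Minimizing $L$ in $\Phi$ then amounts, pointwise, to computing $\inf_{\xi\in\RR^N}\big(\kk(x)\,|\xi|-\xi\cdot\nabla p(x)\big)$, which equals $0$ when $|\nabla p(x)|\le\kk(x)$ and $-\infty$ otherwise; this is exactly the constraint $|\nabla p|\le\kk$. Likewise, the uncontrolled boundary integral on $\Gamma_D$ is finite only when $p=g$ on $\Gamma_D$. Together these two requirements are precisely membership $p\in\G_\kk$, and on that set the $\Phi$-dependent terms vanish. Minimizing next in $\rho$ uses $\inf_{0\le\rho\le1}\big(-\int_\Omega p\,\rho\,\dd x\big)=-\int_\Omega p^+\,\dd x$, and what remains is exactly the functional defining $D_g^\infty(\tilde\rho)$.

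The crux is the exchange of the infimum in $(\Phi,\rho)$ with the supremum in $p$, i.e. the absence of a duality gap. I would obtain it from the Fenchel--Rockafellar theorem: hypothesis (H1) provides, through the field $\tilde g$, an admissible element of $\G_\kk$ (so the dual is feasible and the relevant conjugate is continuous there), while (H2) makes the Neumann pairing $\tau\int_{\Gamma_N}\eta\,p$ continuous on the appropriate trace space, the non-homogeneous data $g,\eta$ being absorbed by translating with $\tilde g$. Alternatively, since the reduced objective $(\rho,p)\mapsto\int_\Omega(\tilde\rho-\rho)\,p+\tau\int_{\Gamma_N}\eta\,p$ is affine in each variable and $\K_1$ is convex and weak-$*$ compact in $L^\infty(\Omega)$, Sion's minimax theorem allows the swap directly. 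I expect this no-gap step to be the main obstacle: the Beckmann energy $\int_\Omega\kk\,|\Phi|$ is of $L^1$/measure type and non-coercive in $L^s$, so the primal infimum in $\Phi$ need not be attained --- which is why the statement keeps $\inf_\Phi$ while asserting $\min_\rho$ --- and the duality must be run at the level of values, relaxing $\Phi$ to a measure flow if needed (\cf \cite{Sanatambrogio-duality}).

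Finally, \eqref{mininf} follows because $\rho\mapsto\sup_{p\in\G_\kk}\big\{\int_\Omega(\tilde\rho-\rho)\,p+\tau\int_{\Gamma_N}\eta\,p\big\}$ is a supremum of affine maps, hence convex and weak-$*$ lower semicontinuous, so it attains its minimum on the weak-$*$ compact set $\K_1$. For the optimality conditions I would insert optimal $\rho$ and $p$ into the chain of equalities establishing $\N=D_g^\infty$ and force every inequality to be an equality. Saturation of the $\rho$-step $\inf_{\K_1}(-\int p\,\rho)=-\int p^+$ happens exactly when $\rho=1$ on $\{p>0\}$, $\rho=0$ on $\{p<0\}$ and $\rho\in[0,1]$ on $\{p=0\}$, that is $\rho\in\operatorname{Sign}^+(p)$ a.e.; and maximality of $p$ over the convex set $\G_\kk$ for the concave functional $D_g^\infty$, using that $\operatorname{Sign}^+(p)$ is the subdifferential of $\int_\Omega p^+$, yields the stated first-order variational inequality.
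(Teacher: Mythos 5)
Your architecture matches the paper's: reduce the inner flow problem to a supremum over potentials $p\in\G_\kk$, swap the infimum over $\rho\in\K_1$ with that supremum by a minimax theorem (your Sion alternative plays exactly the role of the Von Neumann--Fan theorem the paper invokes), read off attainment of $\min_\rho$ from weak-$*$ compactness of $\K_1$, and extract $\rho\in\operatorname{Sign}^+(p)$ and the variational inequality by forcing equality in the chain. Your treatment of the $\rho$-step, of the pointwise minimization in $\Phi$ producing the constraint $\vert\nabla p\vert\le\kk$, and of the optimality conditions is correct and in places more explicit than the paper's.

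The gap is in the one step you yourself flag as the crux: strong duality for the inner Beckmann-type problem with the boundary gain term, i.e.
\begin{equation}
\inf_{\tau\Phi\in\F(\tilde\rho-\rho)}\Big\{\tau\int_\Omega\kk\,\vert\Phi\vert\,\dd x-\tau\int_{\Gamma_D}g\,\Phi\cdot\bnu\,\dd x\Big\}=\max_{p\in\G_\kk}\Big\{\int_\Omega(\tilde\rho-\rho)\,p\,\dd x-\tau\int_{\Gamma_N}p\,\eta\,\dd x\Big\},
\end{equation}
which is the paper's Lemma \ref{phj}. Your justification --- ``(H1) provides an admissible element of $\G_\kk$, so the dual is feasible and the relevant conjugate is continuous there'' --- conflates dual feasibility with a constraint qualification; feasibility alone does not close the duality gap. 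Moreover the natural interior-point qualification for Fenchel--Rockafellar fails here in general: the paper only assumes $0\le\kk\in\mathcal C(\overline\Omega)$, so $\kk$ may vanish and the set $\{\vert\nabla p\vert\le\kk\}$ need have no interior in the relevant topology, while on the primal side the energy $\int_\Omega\kk\vert\Phi\vert$ is non-coercive in any $L^s$ and the infimum is not attained. The paper handles this by the perturbation method: it defines the value function $\mathcal H(h)$ of the problem perturbed by a measure $h\in\mathcal M_b(\overline\Omega)$, proves directly that $\mathcal H$ is convex and lower semicontinuous for the weak convergence of measures (the technical heart is the construction of corrector fields $\psi_n$ via an auxiliary $s$-Laplacian problem, Proposition \ref{phj1}), computes $\mathcal H^*$ via Lemma \ref{Ltech}, and concludes from $\mathcal H(0)=\mathcal H^{**}(0)$. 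Some argument of this kind --- or an explicit relaxation of $\Phi$ to vector measures with a compactness/l.s.c. proof --- is needed to make your no-gap step rigorous; as written, it is asserted rather than proven.
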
 
	
	\begin{remark}
		Thanks to Theorem \ref{Tdualhomogeneous}, one sees that  the condition \eqref{condg} is a sufficient and necessary condition. 	 It is more or less well known by now that this condition is equivalent to the fact that (see for instance \cite{EINHJ})  
		$$g(x)-(y)\leq \min\left\{ \int_0^1 \kk(\varphi(t))\: \vert \dot {\varphi} (t)\vert \: \dd t \: :\:  \varphi \in \Lip([0,1],\Omega),\: \varphi(0)=x,\: \varphi(1)=y\right\}.  $$	 
	\end{remark}

	\medskip
	To prove Theorem \ref{Tdualhomogeneous},  we use  Von Neumann-Fan minimax  theorem that we remind below 
	
	\medskip
	\begin{Rtheorem} [Von Neumann-Fan minimax theorem, see for instance \cite{Borwein&Zhuang}]
		Let $X$ and $Y$  be Banach spaces. 	Let $C\subset X$ be nonempty and convex, and let $D\subset Y$ be nonempty, weakly compact 	and convex. Let  $g\: :\:  X\times Y\to \RR$ be convex with respect to  $x\in C$ and concave and upper-semicontinuous with respect to $y\in D,$  and weakly continuous in $y$ when
		restricted to  $D.$ Then
		$$\max_{y\in D}\inf_{x\in C} g(x,y)=  \inf_{x\in C}  \max_{y\in D} g(x,y).  $$
	\end{Rtheorem}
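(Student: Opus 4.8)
The plan is to realize $\N(\tilde\rho)$ as the value of a convex flow minimization whose Fenchel--Rockafellar dual produces the constraint set $\G_\kk$, and then to exchange the remaining two optimizations by the Von Neumann--Fan minimax theorem. Throughout I work with the bilinear pairing
\begin{equation}
\Theta(\rho,p):=\int_\Omega(\tilde\rho-\rho)\,p\,\dd x+\tau\int_{\Gamma_N}\eta\,p\,\dd x .
\end{equation}

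First, for each fixed $\rho\in\K_1$ I would establish the inner flow duality
\begin{equation}
I(\rho):=\inf_{\tau\Phi\in\F(\tilde\rho-\rho)}\Big\{\tau\int_\Omega\kk\,|\Phi|\,\dd x-\tau\int_{\Gamma_D}g\,\Phi\cdot\bnu\,\dd x\Big\}=\sup_{p\in\G_\kk}\Theta(\rho,p)=:J(\rho).
\end{equation}
The inequality $J(\rho)\le I(\rho)$ is elementary: for any admissible $\Phi$ and any $p\in\G_\kk$, integrating by parts and using $p=g$ on $\Gamma_D$ and $\Phi\cdot\bnu=\eta$ on $\Gamma_N$ gives $\Theta(\rho,p)=\tau\int_\Omega\Phi\cdot\nabla p\,\dd x-\tau\int_{\Gamma_D}g\,\Phi\cdot\bnu\,\dd x$, and $|\nabla p|\le\kk$ bounds the first term by $\tau\int_\Omega\kk\,|\Phi|\,\dd x$. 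The reverse (no duality gap) is the Fenchel--Rockafellar step: writing the primal over $\Phi$ with the affine divergence constraint, the conjugate of $\xi\mapsto\kk(x)|\xi|$ is the indicator of $G(x)=\{|\xi|\le\kk(x)\}$, so the dual constraint becomes exactly $|\nabla p|\le\kk$ together with $p=g$ on $\Gamma_D$, i.e.\ $p\in\G_\kk$; assumption (H1) supplies a feasible $\tilde g\in\G_\kk$ and hence the qualification that closes the gap and attains the dual maximum, while (H2) makes the $\Gamma_N$ pairing meaningful. Note the primal infimum in $\Phi$ need not be attained, which is consistent with the statement keeping an $\inf$ there.

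Next I would exchange the order of $\rho$ and $p$. By the first step $\N(\tilde\rho)=\inf_{\rho\in\K_1}\sup_{p\in\G_\kk}\Theta(\rho,p)$, and I apply the Von Neumann--Fan theorem with $X=L^1(\Omega)$, $C=\K_1$, $Y=C(\overline\Omega)$, $D=\G_\kk$ and $g=\Theta$. The set $\K_1$ is convex, $\Theta$ is affine (hence convex) in $\rho$ and affine (hence concave and upper semicontinuous) in $p$; the crucial hypothesis is that $\G_\kk$ is convex and weakly compact, which I obtain from Arzel\`a--Ascoli: the bound $|\nabla z|\le\kk\le\|\kk\|_\infty$ makes $\G_\kk$ equi-Lipschitz while $z=g$ on $\Gamma_D$ fixes its boundary values, so $\G_\kk$ is compact in $C(\overline\Omega)$, on which $p\mapsto\Theta(\rho,p)$ is continuous (using $\tilde\rho-\rho\in L^1$ and (H2) for the trace term, and noting weak and norm topologies agree on the compact $D$). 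The theorem then yields
\begin{equation}
\N(\tilde\rho)=\inf_{\rho\in\K_1}\max_{p\in\G_\kk}\Theta(\rho,p)=\max_{p\in\G_\kk}\inf_{\rho\in\K_1}\Theta(\rho,p).
\end{equation}
Computing the inner infimum pointwise, $\inf_{\rho\in\K_1}\big(-\int_\Omega\rho p\,\dd x\big)=-\int_\Omega p^+\,\dd x$ (take $\rho=1$ where $p>0$ and $\rho=0$ where $p<0$), whence $\inf_{\rho\in\K_1}\Theta(\rho,p)=\int_\Omega\tilde\rho\,p\,\dd x+\tau\int_{\Gamma_N}\eta\,p\,\dd x-\int_\Omega p^+\,\dd x$ and therefore $\N(\tilde\rho)=D_g^\infty(\tilde\rho)$ with the maximum attained.

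Finally, the representation \eqref{mininf} follows because $J=\sup_{p\in\G_\kk}\Theta(\cdot,p)$ is convex and weakly-$*$ lower semicontinuous (a supremum of weakly-$*$ continuous affine maps) while $\K_1$ is weakly-$*$ compact in $L^\infty(\Omega)$, so $\inf_{\rho\in\K_1}J$ is attained and the outer $\inf$ becomes a $\min$; combined with $J(\rho)=I(\rho)$ from the first step this gives \eqref{mininf}. For the optimality conditions, an optimal $\rho$ for $\N(\tilde\rho)$ and an optimal $p$ for $D_g^\infty(\tilde\rho)$ form a saddle point of $\Theta$ on $\K_1\times\G_\kk$. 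Saddle optimality in $\rho$ forces $-\int_\Omega\rho\,p\,\dd x=-\int_\Omega p^+\,\dd x$, which is exactly $\rho\in\operatorname{Sign}^+(p)$ a.e.\ in $\Omega$; saddle optimality in $p$ means $p$ maximizes $\xi\mapsto\Theta(\rho,\xi)$ over $\G_\kk$, which is precisely the stated variational inequality. I expect the main obstacle to be the inner no-gap step: verifying the Fenchel--Rockafellar qualification in the mixed boundary setting and correctly handling the normal-trace pairings on $\Gamma_D$ and $\Gamma_N$ under (H1)--(H2). Once the compactness of $\G_\kk$ is in hand, the minimax exchange and the evaluation of the inner infima are routine.
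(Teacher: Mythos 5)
The statement you were asked to prove is the Von Neumann--Fan minimax theorem itself: the abstract assertion that for Banach spaces $X,Y$, a nonempty convex $C\subset X$, a nonempty weakly compact convex $D\subset Y$, and $g$ convex in $x$ on $C$, concave and upper semicontinuous in $y$ on $D$ and weakly continuous in $y$ on $D$, one has $\max_{y\in D}\inf_{x\in C}g(x,y)=\inf_{x\in C}\max_{y\in D}g(x,y)$. Your proposal never proves this: it invokes it verbatim (``I apply the Von Neumann--Fan theorem with $X=L^1(\Omega)$, $C=\K_1$, $Y=C(\overline\Omega)$, $D=\G_\kk$'') as the pivot of the whole argument, so as a proof of the statement in question it is circular. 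What you have actually written is a proof sketch of Theorem \ref{Tdualhomogeneous}, and it follows essentially the same route as the paper's proof of that theorem: the inner flow duality of Lemma \ref{phj} obtained à la Fenchel--Rockafellar, the minimax exchange, the pointwise evaluation $\inf_{\rho\in\K_1}\bigl(-\int_\Omega\rho\,p\,\dd x\bigr)=-\int_\Omega p^+\,\dd x$, and the saddle-point reading of the optimality conditions. But the paper does not prove the minimax theorem either; it is recalled from the literature (\cite{Borwein&Zhuang}), which is exactly why a blind proof of it must be a self-contained abstract argument, not an application of it to the crowd-motion functional.

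To close the gap you would need to argue at the level of the abstract hypotheses. The standard route: the inequality $\sup_{y\in D}\inf_{x\in C}g\le\inf_{x\in C}\sup_{y\in D}g$ is trivial; for the converse, fix $\alpha<\inf_{x\in C}\max_{y\in D}g(x,y)$ and consider $D_x:=\{y\in D:\ g(x,y)\ge\alpha\}$, which is nonempty, convex (concavity in $y$) and weakly closed in the weakly compact set $D$ (upper semicontinuity and weak continuity in $y$). One shows that every finite family $D_{x_1},\dotsc,D_{x_n}$ has nonempty intersection by a finite-dimensional minimax argument using convexity in $x$ and concavity in $y$ (Fan's lemma, Sion's theorem, or a separation/Helly-type argument on the simplex of convex combinations of $x_1,\dotsc,x_n$); the finite intersection property plus weak compactness of $D$ then yields some $y^*\in\bigcap_{x\in C}D_x$, i.e.\ $\inf_{x\in C}g(x,y^*)\ge\alpha$, which gives the reverse inequality. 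Attainment of the maximum on the left-hand side follows because $y\mapsto\inf_{x\in C}g(x,y)$ is an infimum of weakly upper semicontinuous functions, hence weakly upper semicontinuous on the weakly compact $D$. None of these steps appears in your proposal; conversely, the material you did write (compactness of $\G_\kk$ via Arzelà--Ascoli, the no-gap step under (H1)--(H2), the sign-condition $\rho\in\operatorname{Sign}^+(p)$) belongs to the proof of Theorem \ref{Tdualhomogeneous}, not of the minimax theorem.
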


	\medskip
	To this aim, we use the following result which goes back to \cite{EINHJ} in the case where $\eta\equiv 0.$  For completeness, a proof  is given in  Appendix-\ref{subsection:duality}.
	
	\begin{lemma}
		\label{phj}
		We have  
		\begin{equation}\label{std1}
			\begin{array}{l}
				\inf_{\tau \Phi\in \F (\tilde \rho-\rho)}\left \{ \tau  \int_\Omega \kk(x)\: \vert \Phi(x)\vert\: \dd x -\tau  \int_{\Gamma_D} g(x)\: \Phi\cdot \bnu \:     \dd x   \right\} \\  \\
				\hspace*{1cm}= 	\max_{p\in \G_\kk}\left \{   \int (\tilde \rho-\rho)\: p\:  \dd x   - \tau\:  \int_{\Gamma_N} p\: \eta \: \dd x \right\}   . 
			\end{array} 
		\end{equation} 
	\end{lemma}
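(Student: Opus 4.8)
The asserted equality is the convex (Fenchel--Rockafellar) duality between a $\kk$-weighted Beckmann minimum-flow problem and a Kantorovich-type problem carrying the pointwise constraint $\vert\nabla p\vert\le\kk$; the two boundary pieces enter through the Dirichlet lift $\tilde g$ supplied by (H1) and through the $H_{\mathrm{div}}$-trace pairing on $\Gamma_N$, legitimate under (H2). I would stress that the Von Neumann--Fan theorem is \emph{not} the right tool for the lemma itself: the linear constraint $\tau\Phi\in\F(\tilde\rho-\rho)$ has to be dualized by an \emph{unbounded} multiplier (the potential), so no weakly-$*$ compact dual set is at hand; the minimax theorem will instead come into play at the level of Theorem~\ref{Tdualhomogeneous}, where the extra minimization runs over the weakly-$*$ compact set $\K_1$.

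\noindent\textbf{Easy inequality (weak duality).}
Writing $\Psi:=\tau\Phi$, I would first prove ``$\ge$''. Fix any admissible $\Psi\in\F(\tilde\rho-\rho)$ and any $p\in\G_\kk$. Gauss's theorem in the $H_{\mathrm{div}}$ sense recalled above, together with $-\dive\Psi=\tilde\rho-\rho$ in $\Omega$, the normal-trace condition on $\Gamma_N$, and $p=\tilde g=g$ on $\Gamma_D$, gives
\[
\int_\Omega\Psi\cdot\nabla p\,\dd x=\int_\Omega(\tilde\rho-\rho)\,p\,\dd x+\int_{\Gamma_N}\eta\,p\,\dd x+\int_{\Gamma_D}g\,\Psi\cdot\bnu\,\dd x ,
\]
the $\Gamma_N$ integral entering with the normalization fixed by the feasibility relation $\tau\Phi\in\F$. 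Substituting into the primal objective and using the pointwise bound $\Psi\cdot\nabla p\le\vert\nabla p\vert\,\vert\Psi\vert\le\kk\,\vert\Psi\vert$ (valid exactly because $\vert\nabla p\vert\le\kk$) yields
\[
\int_\Omega\kk\,\vert\Psi\vert\,\dd x-\int_{\Gamma_D}g\,\Psi\cdot\bnu\,\dd x\ \ge\ \int_\Omega(\tilde\rho-\rho)\,p\,\dd x+\int_{\Gamma_N}\eta\,p\,\dd x .
\]
Taking the infimum over $\Psi$ and the supremum over $p\in\G_\kk$ closes this direction.

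\noindent\textbf{No duality gap.}
For the reverse inequality I would invoke Fenchel--Rockafellar duality (see \cite{Ekeland&Temam}), taking the \emph{potential} problem as primal. I homogenize by $p=q+\tilde g$ with $q\in W^{1,s'}_{\Gamma_D}(\Omega)$, so the right-hand side becomes $\inf_q\{\Theta(q)+\Xi(\nabla q)\}$, where $\Theta$ is the affine functional $q\mapsto-\int_\Omega(\tilde\rho-\rho)(q+\tilde g)-\int_{\Gamma_N}\eta(q+\tilde g)$ and $\Xi$ is the indicator of $\{z\in L^{s'}(\Omega)^N:\ \vert z+\nabla\tilde g\vert\le\kk\text{ a.e.}\}$. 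The dual variable is then a flux $\Psi\in L^{s}(\Omega)^N$: the Legendre transform of $\kk\,\vert\cdot\vert$ is exactly the indicator of the ball $\{\vert\cdot\vert\le\kk\}$, which regenerates the cost $\int_\Omega\kk\vert\Psi\vert$, while the adjoint of $\nabla$ is $-\dive$ and forces $\Psi\in\F(\tilde\rho-\rho)$; the term $\int_{\Gamma_D}g\,\Psi\cdot\bnu$ is recovered from the $\tilde g$-contribution through the same integration by parts as above. The qualification condition is supplied by (H1): since $\nabla\tilde g\in G(x)$ a.e., the point $q\equiv0$ makes $\Xi$ finite and continuous there, so the primal and dual values coincide.

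\noindent\textbf{Attainment and the main obstacle.}
The maximum on the right is attained: after translating by $\tilde g$ the set $\G_\kk$ is bounded in $W^{1,\infty}(\Omega)$, closed and convex, hence weakly-$*$ compact, and the objective is linear, hence weakly-$*$ continuous. I expect the genuine difficulty to be the no-gap step, not the weak-duality estimate: the primal infimum over $\Psi$ is typically \emph{not} attained in $L^s$, and the normal traces live only in $W^{-1/s,s}(\Gamma)$, so the integration by parts and the verification of the constraint qualification must be carried out within the $H_{\mathrm{div}}$-trace framework. Here (H1) is precisely what guarantees $\G_\kk\neq\emptyset$ (equivalently, the geodesic compatibility of $g$ recalled in the Remark) and the interior-point condition, while (H2) ensures $\eta$ pairs correctly against traces; absent (H1) the dual problem could be infeasible and the identity would break down.
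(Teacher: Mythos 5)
Your weak-duality half is sound and essentially reproduces the ``$I(h,\Phi)\le 0$'' computation inside the paper's Lemma~\ref{Ltech}, and you are right that Von Neumann--Fan is reserved for Theorem~\ref{Tdualhomogeneous}. The gap is in your ``no duality gap'' step. You dualize the potential problem over $q\in W^{1,s'}_{\Gamma_D}(\Omega)$ with $\Xi$ the indicator of $\{z\in L^{s'}(\Omega)^N:\ \vert z+\nabla\tilde g\vert\le\kk\ \hbox{a.e.}\}$ and claim that (H1) makes $\Xi$ finite and continuous at $z=0$. It does not: for $s'<\infty$ a set defined by a pointwise bound has empty interior in $L^{s'}$ (every $L^{s'}$-ball around any of its points contains functions that are arbitrarily large on sets of small measure), so $\Xi$ is nowhere continuous and the Fenchel--Rockafellar qualification you invoke fails; (H1) only guarantees feasibility, i.e. $\G_\kk\neq\emptyset$, not an interior point. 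There is also an internal inconsistency: with the potential problem as primal, the qualification would force attainment of the dual, i.e. of the flux problem in $L^{s}(\Omega)^N$, whereas the paper explicitly records that this infimum is in general not attained by a Lebesgue vector field but only by a vector measure. Passing to $C(\overline\Omega)^N$ for the gradient variable does not rescue the argument either, since continuity of the indicator there would require a strict uniform inequality $\vert\nabla\tilde g\vert<\kk$, which is not assumed and is impossible wherever $\kk$ vanishes.

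The paper avoids the qualification issue altogether with a perturbation (value-function) argument: it defines $\mathcal H(h)$ as the value of the flux problem with datum perturbed by $h\in\mathcal M_b(\overline\Omega)$, proves by hand that $\mathcal H$ is convex and lower semicontinuous --- the l.s.c. step being the real technical content, carried out by correcting admissible fluxes with auxiliary fields $\psi_n=\vert\nabla u_n\vert^{s-2}\nabla u_n$ built from an $s$-Laplacian minimization so that $\Vert\psi_n\Vert_{L^1}\to 0$ and the boundary pairing vanishes in the limit --- and then concludes $\mathcal H(0)=\mathcal H^{**}(0)=\sup_{p}\,-\mathcal H^{*}(p)$, computing $\mathcal H^{*}$ via Lemma~\ref{Ltech} to identify the constraint set $\G_\kk$. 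Your attainment argument for the right-hand maximum (weak-$*$ compactness of $\G_\kk$ after translation by $\tilde g$) is fine, but to close the proof you must either reproduce the lower-semicontinuity argument for the value function or enlarge the flux space to vector measures with $C^1$ test potentials; as written, the central step of your proof does not go through.
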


	\bigskip
	\begin{proof}[Proof of Theorem \ref{Tdualhomogeneous}]   Since $\tilde \rho-\rho\in L^s(\Omega)$ and $\eta \in W^{-\frac{1}{s},s}(\Gamma_N)$, we know that    $\F (\tilde \rho-\rho)\neq\emptyset$.  Moreover, since 
		\begin{equation}\label{simple1}
			\N (\tilde\rho) = \inf_{\rho\in \K_1}  \inf_{\tau \Phi\in \F (\tilde \rho-\rho)}\left \{  \tau \int_\Omega \kk(x)\: \vert \Phi(x)\vert\: \dd x -\tau \int_{\Gamma_D} g(x)\: \Phi\cdot \bnu \:     \dd x   \right\},
		\end{equation}   
		by   Lemma \ref{phj},  we get 
		\begin{equation}\label{fromHJ}
			\N (\tilde\rho)   =    \inf_{\rho\in \K_1} 
			\max_{p\in \G_\kk}\left \{   \int (\tilde \rho-\rho)\: p\:  \dd x -\tau \:  \int_{\Gamma_N} p\: \eta \: \dd x \right\}.
		\end{equation}
		Using	 Von Neumann-Fan minimax theorem as in \cite{Borwein&Zhuang}, we deduce that    
		\begin{eqnarray}
			\N (\tilde\rho)  &=& \max_{p\in \G_\kk}\inf_{\rho \in \K_1}   \left \{   \int (\tilde \rho-\rho)\: p\:  \dd x -\tau\:  \int_{\Gamma_N} p\: \eta \: \dd x \right\}  \\ \\
			&=& \max_{p\in \G_\kk}  \left \{ \int  \tilde \rho \: p\:  \dd x - \int  p^+\:  \dd x  -\tau\:  \int_{\Gamma_N} p\: \eta \: \dd x  \right\}   . 
		\end{eqnarray} 
		Taking 
		$$p=  \argmax_{p\in W^{1,\infty}(\Omega) ,\: p_{\mid\Gamma_D}=g}  \left \{ \int  \tilde \rho \: p\:  \dd x - \int  p^+\:  \dd x  + \tau\: \int_{\Gamma_N} p\: \eta \: \dd x   \right\} 
		$$ and 
		$\rho=\hbox{Sign}_0^+(p)$  and using Lemma \ref{phj}, we deduce the equivalence between the solutions  of \eqref{std1} and \eqref{system2}.   	Thus the result of the theorem.

	\end{proof}

	\begin{remark}
		\begin{enumerate}
			\item It is known that the optimal flux  in \eqref{mininf} is not reached for a Lebesgue vector valued function $\Phi.$ Indeed, since the structure  of $F,$ one expects the optimal flux to be a  Radon measure  vector valued function $\Phi$. However, if  this is true and  if $(\rho,\Phi)$  and $p$     are solutions of  both problems $\N (\tilde\rho) $ and  $D(\tilde \rho)$ respectively, then     $\rho\in \hbox{Sign}^+(p),$  a.e. in $\Omega,$   $\Phi\cdot \nabla p =\kk\:  \vert \Phi\vert $ in $\Omega$   and 		 
			\begin{equation} 
				\tau\: \int_\Omega \Phi\cdot \nabla \xi\: \dd x = \int_\Omega (\tilde \rho -\rho)\: \xi~\dd x + \tau\: \int_{\Gamma_N} \eta\: \xi\: \dd x,\quad \hbox{ for any }\xi\in W^{1,s'}_{\Gamma_D}(\Omega).	\end{equation} 
			In general, one needs to be careful with the treatment of $\Phi\cdot \nabla p,$ since $\Phi$ is not   regular in general.  Here one needs, to use the notion of tangential gradient of $p$ (see \eg \cite{Bouchitte&al}) to handle the related PDE.

			\item  In connection with Evans-Gangbo formulation, the corresponding PDE may be written   as 
			\begin{equation} \label{statpde2}
				\left\{
				\begin{array}{ll}
					\displaystyle \tilde \rho -\tau\:  \dive(  \rho \:  W  )=  0,\quad  W = m \:  \nabla p\\  \\
					m\geq 0,\: p\geq 0,\: 0\leq \rho\leq 1,\: p(\rho-1)=0\\  \\
					\vert \nabla p\vert \leq \kk,\: m(\vert \nabla p\vert -\kk)=0
				\end{array}
				\right\} \quad \hbox{ in }  \Omega, 
			\end{equation} 
			subject to boundary condition 
			\begin{equation} 
				\left\{ 
				\begin{array}{ll}
					\displaystyle  \Phi\cdot \bnu =\eta \quad & \hbox{ on }(0,\infty)\times \Gamma_N, \\  \\
					p=g& \hbox{ on }(0,\infty)\times \Gamma_D.
				\end{array}
				\right.  
			\end{equation}

			\item 	As a formal consequence of Theorem \ref{Tdualhomogeneous}, under  the assumptions (H1)-(H2), the algorithm in Section \ref{Salgorithm} turns out  in solving successively two PDEs, a transport equation and a nonlinear second order equation.   This enables also to establish a continuous  model in terms of nonlinear PDE. This is summarized  in the following items.  
			\begin{enumerate} 
				\item The sequence $\rho_{1/2},\rho_{1},\dotsc,\rho_{k},\rho_{k+1/2},\rho_{k+1},\dotsc,\rho_{n}$ given by the algorithm in Section \ref{Salgorithm} is characterized by:  for each  $k=0,\dotsc,n-1,$ we have

				\begin{itemize}
					\item   \textbf{Prediction:} $ \rho_{k+\frac{1}{2}} =\rho(t_{k+\frac{1}{2}}) ,$ 
					where $\rho$ is the solution of the transport equation  : 
					\begin{equation} \partial_t \rho+\dive(\rho\: V )=0\quad \hbox{ in }[t_k,t_{k+\frac{1}{2}}[,
					\end{equation}  with  $ \rho(t_k)=\rho_k,$   	$V$ is a  given vector field. For instance $V=-\nabla \varphi/\Vert \nabla \varphi \Vert$ and $\varphi$ is  the solution of   the eikonal equation \eqref{hjk}. 
					
					\item  \textbf{Correction:}  $\rho_{k+1}$ is a   solution of the PDE

					\begin{equation} 	\left\{
						\begin{array}{ll}
							\left. 	\begin{array}{l}
								\rho_{k+1}  - \tau\: \dive( \rho_{k+1}\:  W) =\rho_{k+1/2} ,  \quad 
								W = m \:  \nabla p_{k+1} \\  \\
								m\geq 0,\: p_{k+1}\geq 0,\: 0\leq 	\rho_{k+1}  \leq 1,\: p_{k+1}(\rho_{k+1}-1)=0\\  \\
								\vert \nabla p_{k+1}\vert \leq \kk,\:  m(\kk-\vert \nabla p_{k+1}\vert)=0   
							\end{array} \right\}  \quad & \hbox{ in } \Omega,\\  \\
							\Phi \cdot \bnu =\eta  & \hbox{ on }\Gamma_N,\\  \\
							p_{k+1}=g    & \hbox{ on }\Gamma_D.
						\end{array}
						\right.
					\end{equation} 
					
				\end{itemize}		 
				
				\item  Considering the application $\rho_\tau \: :\: [0,T)\to L^\infty(\Omega)$ and $p_\tau \: :\: [0,T)\to W^{1,\infty}(\Omega)$  given by 
				$$\rho_\tau(t)=\left\{  \begin{array}{ll}
					\rho_{k+\frac{1}{2}}   \quad &\hbox{ for any } t\in [t_k,t_{k+\frac{1}{2}}[ \\ \\  
					\rho_{k+1 } &\hbox{ for any } t\in [t_{k+\frac{1}{2}},t_{k+1}[ 
				\end{array}\right.   \quad \hbox{ for } k=0,1,\dotsc, n-1,  $$
				and 	$$p_\tau(t)=\left\{  \begin{array}{ll}
					0  \quad &\hbox{ for any } t\in [t_k,t_{k+\frac{1}{2}}[ \\ \\  
					p_{k+1 } &\hbox{ for any } t\in [t_{k+\frac{1}{2}},t_{k+1}[ 
				\end{array}\right.   \quad \hbox{ for } k=0,1,\dotsc, n-1,  $$ one expects that 
				\begin{itemize}
					\item $\rho_\tau   \to  \rho $  and $p_\tau   \to  p $ as $\tau\to 0,$  
					
					\item the couple $(\rho,p)$ satisfies the following evolution PDE 
					
					\begin{equation} \label{evolgran}
						\left\{
						\begin{array}{ll}
							\displaystyle \frac{\partial \rho }{\partial t}  +\dive(  \rho  \: (V- W)   )=  0,\quad  W =m  \:  \nabla p \\  \\
							m\geq 0,\: p\geq 0,\: 0\leq \rho\leq 1,\: p(\rho-1)=0\\  \\
							\vert \nabla p\vert \leq\kk,\: m(\vert \nabla p\vert -\kk)=0
						\end{array}
						\right\} \quad \hbox{ in } (0,\infty)\times \Omega. 
					\end{equation} 
					subject to boundary condition 
					\begin{equation} 
						\left\{ 
						\begin{array}{ll}
							\displaystyle  \Phi\cdot \bnu =\eta \quad & \hbox{ on }(0,\infty)\times \Gamma_N \\  \\
							p=g& \hbox{ on }(0,\infty)\times \Gamma_D.
						\end{array}
						\right.  
					\end{equation}   
					
				\end{itemize}

			\end{enumerate}		  
			
			\item See that the patch $W$ is null  outside  the congestion zone $[\rho=1].$

			\item  	Remember here, that the main operator which governs the correction step in this case,   given by 
			\begin{equation} 
				\left\{
				\begin{array}{ll}
					-\nabla \cdot (m  \:  \nabla p)  = \mu  \\  \\
					m\geq 0,\:  
					\vert \nabla p\vert \leq \kk,\: m(\vert \nabla p\vert -\kk)=0,
				\end{array}
				\right.  
			\end{equation} 
			is well known in the study of sandpile  (see \cite{IgDu} and the references therein). The dynamic here is connected to  a granular one. In other words the individuals behaves like grains of sand (see \cite{EvReza} and also \cite{Igstoch} for a stochastic microscopic description of the granular dynamic),  in the congestion zone and not like a fluid as follows from the quadratic case.

		\end{enumerate}
		
	\end{remark}

	\begin{remark}
		After all, the nonlinear PDE \eqref{evolgran} is a new model   we propose for the description of dynamical population where the movement of the agent is of   granular type like in sandpile.    In this paper, 
		we are proposing  its numerical study.   The theoretical study is a challenging problem for existence and uniqueness. This is an open problem and will not be treated in this paper.  Recall that, the case where the PDE is of diffusive type the PDE is well used and studied. There is a huge literature on this case,  one can see the recent paper \cite{Igshuniq} and the references therein for more details.   
		
	\end{remark}

	\begin{remark}
		In the  case where $V$ is computed just before the $k-$th prediction step by taking the speedy path given  the following eikonal equation 
		\begin{equation}\label{hjp}
			\left\{ \begin{array}{ll}
				\Vert \nabla \varphi\Vert =H(p_k)\quad & \hbox{ in }\Omega,\\   \\
				\varphi =0 & \hbox{ on }\Gamma_D,
			\end{array}
			\right.
		\end{equation}
		where $H$ is a given positive continuous function,  the  evolution problem \eqref{evolgran} needs to be coupled with the system 
		\begin{equation} 
			\left\{ \begin{array}{ll}
				V=-\nabla \varphi/\Vert \nabla \varphi \Vert \quad & \hbox{ in }\Omega \\  	\Vert \nabla \varphi\Vert =H(p)\quad & \hbox{ in }\Omega,\\   \\
				\varphi =0 & \hbox{ on }\Gamma_D.
			\end{array}
			\right.
		\end{equation}
		This is an interesting variant of Hugues model where the speedy path is computed by taking into account the congestion of the crowd. Indeed, taking $H$ a continuous function such that $H(p)$ is take instantaneously large value for positive $p,$ enables to avoid congestion zones.   From theoretical point of view, the eikonal equation turns out to be a well posed and stable problem since $p$ and then $H(p)$ are    regular, rather than $\rho$ as in Hugues model.     To improve the algorithm, we take in some numerical computation $f=f(p)$ in \eqref{hjk}  to compute   the spontaneous velocity field $V$.  The theoretical study of the corresponding evolution problem will be treated in forthcoming works. 
	\end{remark}

	\begin{remark} [Quadratic case]\label{remark:quadraric_case}
		Before to end up this section let summarize here some  formal results  concerning the quadratic case. The proofs may be found  in   \cite{IgCrowd} where the second author study some connected dynamic in the case of two populations.   	The quadratic case corresponds to  
		\[
		F(x,\xi) =\frac{1}{2} \vert \xi\vert^2, \hbox{ for any }(x,\xi)\in \Omega \times \RR^N.
		\] 
			The infimum in \eqref{argmink}   coincides with 
			\begin{equation}\label{mindual2}
				\inf_{p}\left\{ \int p^+(x) \dd x+ \frac{1}{2} \int   \vert  \nabla p(x) \vert^{2} \: \dd x - \int  p(x)\:  \tilde \rho(x)  \: \dd x:~p\in H^1(\Omega) ,\ p=g\hbox{ on }\Gamma_D   \right\} .
			\end{equation}
			Moreover, $p$ and $(\rho,\Phi)$  are solutions of  both problems  respectively, if and only if $(p,\rho,\Phi)$ is a solution of the following PDE
			\begin{equation}
				\label{system3}
				\left\{
				\begin{array}{ll}
					\left. 	\begin{array}{l}
						\rho - \dive( \Phi) = \tilde \rho,\quad   \Phi =  \nabla p    \\   \\
						\rho \in \hbox{Sign}^+(p)
					\end{array} \right\}  \quad & \hbox{ in } \Omega,\\  \\
					\Phi \cdot \bnu =\eta & \hbox{ on }\Gamma_N,\\  \\
					p=g    & \hbox{ on }\Gamma_D.
				\end{array}
				\right.
			\end{equation}
			In some sense, this implies that   the quadratic case is closely connected to  the system \eqref{Maury1} which was proposed by  B.  Maury \etal  (\cf  \cite{MRS1}) in the framework of gradient flows in the Wasserstein space of probability measures.  And, moreover, 	  the correction step corresponds simply to the time Euler-Implicit discretization for the diffusion process in 
			\eqref{Maury1}.

			%
			
			%
		
	\end{remark}
	
	\begin{remark} 
		\begin{enumerate}
			\item 
			Notice here that even though our approaches (based on  minimum flow problem),  provide the same continuous dynamics (at least in the quadratic case) with  gradient flow in the Wasserstein space of probability measures, both approaches are not the same at discrete level. While, the correction  with this approach   is recovered by a projection  with respect to $\Wass_2$ on the set $\{\rho\in L^\infty(\Omega)\: :\: 0\leq \rho\leq 1\hbox{ a.e. in }\Omega  \},$   our approach provides the correction by solving    an  elliptic problem through a minimum flow problem.   As far as we know, these are not the same even though one can be considered as an approximation of the other. 
			
			
			\item 	 
			In contrast of the quadratic case, in the homogeneous case  we do believe here that  recovering 	the correction by a projection  with respect to $\Wass_1$ on the set $\{\rho\in L^\infty(\Omega)\: :\: 0\leq \rho\leq 1\hbox{ a.e. in }\Omega  \},$ or by using our approach 
			are the same in the homogeneous case. 	This issue would be treated in forthcoming works.

		\end{enumerate}
		
	\end{remark}

	\section{Numerical approximation}\label{section:numerics}
	
	\subsection{Formulation and discretization} As discussed in Section-\ref{section:model}, the approximation of the density $\rho$ is performed via a prediction-correction strategy. The first step (prediction) consists in the resolution of the continuity equation \eqref{eq:continuity} which will be done using an Euler scheme for the time discretization, whereas the term $\dive(V\rho)$ is discretized using finite volumes. The second step (correction or projection) relies on  a minimum flow  problem which will be solved using a primal dual algorithm (PD). To begin with, let us give details concerning the discretization of the problems \eqref{eq:continuity}-\eqref{argmink}.
	
	\textbf{Domain discretization:} 
	In this section, we solve numerically $\eqref{eq:continuity} $ and \eqref{argmink} on the domain $\Omega$ shown on Figure \ref{fig:domain}. This domain represents a room surrounded by walls which we call $\Gamma_N$ and has an exit door $\Gamma_D$. The domain is divided into a set of $m \times n$ control volumes of length $h$ and width equal to $h$. We denote by $C_{i,j}$ the cell at the position $(i,j)$ and by $\Psi_{i,j}$ is the average value of the quantity $\Psi$ on $C_{i,j}$. At the interface of $C_{i,j}$, $\omega_{i+\frac{1}{2},j}$, $\omega_{i-\frac{1}{2},j}$, $\omega_{i,j+\frac{1}{2}}$ and  $\omega_{i,j-\frac{1}{2}}$ are the in/out flow quantities (see Figure-\ref{fig:domain}).\\ 
	\begin{figure}[H]
		\includegraphics[scale=0.27]{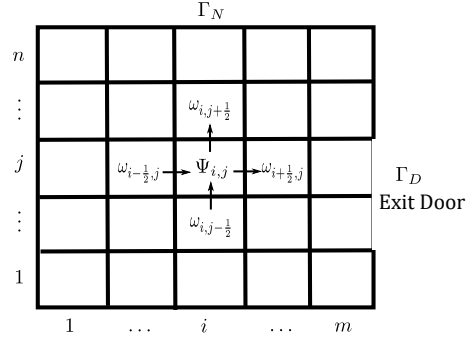}
		\caption{Discretization of the domain $\Omega$. }
		\label{fig:domain}
	\end{figure}
	
	We define discrete divergence is defined by:
	\begin{equation}\label{eq:div1}
		(\odiv_h\Phi)_{i, j} = \frac{\Phi^{1}_{i+\frac{1}{2},j} - \Phi^{1}_{i-\frac{1}{2},j}}{h}  +\frac{\Phi^{2}_{i,j+\frac{1}{2}} - \Phi^{2}_{i,j-\frac{1}{2}}}{h}.
	\end{equation}
	To take into account the Neumann boundary condition $\Phi\cdot\bnu = 0$ on $\Gamma_N$, we  impose:
	\begin{itemize}
		\item $\Phi_{\frac{1}{2},j}^1=0$, for $1\leq j \leq n$, 
		\item $\Phi_{m+\frac{1}{2},j}^1=0 ,  ~ \text{if} \: ((m+\frac{1}{2} )h,j h)  \in \Gamma_N$ ,    
		\item $\Phi_{i,\frac{1}{2}}^2=0$, for $1\leq i \leq m$,
		\item $\Phi_{i,n+\frac{1}{2}}^2=0$, for $1\leq i \leq m.$
	\end{itemize}
	We can rewrite this in a more compact way
	\begin{equation}
		\label{eq:div_matrix}
		\begin{aligned}
			(\odiv_h\Phi)_{i,j}^1&=D^{1}_{p}\Phi^{1}_{i,j}, ~\text{if} \: ((m+\frac{1}{2} )h,j h)  \in \Gamma_D,\\
			(\odiv_h\Phi)_{i,j}^1&=D^1_m\Phi^{1}_{i,j}, ~\text{if} \: ((m+\frac{1}{2} )h,j h)  \in \Gamma_N,\\
			(\odiv_h\Phi)_{i,j}^2&=D^2\Phi^{2}_{i,j},
		\end{aligned}
	\end{equation}
	where the matrices $D^{1}_{m},D^{1}_{p}, D^{2}$ are recalled in Appendix-\ref{subsection:discrete_op}.
	Then,  we define the discrete gradient operator as follows:
	
	\begin{equation}
		\label{eq:grad_h}
		\begin{aligned}
			(\nabla_{h}p)^1_{i,j}&= ~-^tD^1_p p(i,j) ,~\text{if} \: ((m+\frac{1}{2} )h,j h)  \in \Gamma_D,\\
			(\nabla_{h}p)^1_{i,j}&= ~-^tD^1_m p(i,j) ,  ~\text{if} \: ((m+\frac{1}{2} )h,j h)  \in \Gamma_N,\\
			(\nabla_{h}p)^2_{i,j}&= ~-^tD^2 p(i,j).
		\end{aligned}
	\end{equation}
	This being said, one can easily check that $\dive_h = -\nabla_h^{*}$.\\

	\textbf{Discretization of the transport equation \eqref{eq:continuity} :}
	We use a splitting method as follows. Given a final time $T>0$ and a timestep $\tau >0$, we decompose the interval $[0,T]$ into subintervals $[t_k,t_{k+\frac{1}{2}}]$ and $[t_{k+\frac{1}{2}}, t_{k+1}]$, with $k=0,\dotsc,n-1$. On each interval $[t_k,t_{k+\frac{1}{2}}]$ we solve the following continuity equation
	\begin{equation}
		\label{eq:continuity1}
		\left\lbrace
		\begin{aligned}
			\partial_t \rho + \dive(V\rho)&= 0\\
			\rho(t_k) &=\rho^{k-1},
		\end{aligned}
		\right.
	\end{equation}
	to obtain $\rho^{k+\frac{1}{2}}$, where $V=(V^x,V^y)$ is the velocity field given by $V = -\nabla\mathbf{D}/\Vert \nabla\mathbf{D}\Vert$,  and $\mathbf{D}$ being the distance (not necessary euclidean) to the boundary $\Gamma_D$  given by the eikonal equation \eqref{hjk} whose resolution is recalled in Appendix-\ref{subsection:eikonal}. Solving \eqref{eq:continuity1} can be done by combining a finite difference method in the time variable combined with a $2$D finite volume method in the space variable. We approximate the term $\dive(V\rho)$ in the cell $C_{i,j}=[x_{i-\frac{1}{2},j},x_{i+\frac{1}{2},j}]\times [y_{i,j-\frac{1}{2}},y_{i,j+\frac{1}{2}}]$ as follow:
	\begin{equation*}
		(\dive(V\rho))_{i,j}=\frac{1}{\Delta x} [\rho_{i+\frac{1}{2},j} V^{x}_{i+\frac{1}{2},j}-\rho_{i-\frac{1}{2},j} V^{x}_{i,j}]+\frac{1}{\Delta y} [\rho_{i,j+\frac{1}{2}} V^{y}_{i,j+\frac{1}{2}}-\rho_{i,j-\frac{1}{2}} V^{x}_{i,j-\frac{1}{2}}],
	\end{equation*}  
	where $(\dive(V\rho))_{i,j}$ the value of $\dive(V\rho)$ in the cell $C_{i,j}$ and $(\Delta x,\Delta y)$ are the spatial discretization. Notice that in practice, we take $\Delta x = \Delta y = h$, where $h$ is the mesh size introduced above.
	
	For the time disctization, we use the Euler explicit method to approximate the time derivative of the density. The overall scheme can the be written as:
	
	\begin{equation}
		\label{eq:disc_continuity}
		\frac{\rho_{i,j}^{k+\frac{1}{2}}-\rho_{i,j}^{k}}{\tau}+\frac{1}{\Delta x} [\rho_{i+\frac{1}{2},j}^{k} V^{x}_{i+\frac{1}{2},j}-\rho_{i-\frac{1}{2},j} ^{k}V^{x}_{i-\frac{1}{2},j}]+\frac{1}{\Delta y} [\rho_{i,j+\frac{1}{2}}^{k} V^{y}_{i,j+\frac{1}{2}}-\rho_{i,j-\frac{1}{2}}^{k}V^{x}_{i,j-\frac{1}{2}}] = 0
	\end{equation}
	where $\rho_{i,j}^{k+\frac{1}{2}}$ is the average value of $\rho$ in the cell $C_{i,j}=[x_{i-\frac{1}{2},j},x_{i+\frac{1}{2},j}]\times [y_{i,j-\frac{1}{2}},y_{i,j+\frac{1}{2}}]$ at time $({k+\frac{1}{2}})\tau$, and $\rho_{i+\frac{1}{2},j}^{k}$, $V^x_{i+\frac{1}{2},j}$ are the values of $\rho$ and $V$at the interface $x_{i+\frac{1}{2},j}$ at time $k\tau$ respectively. Similarly, $(\rho_{i-\frac{1}{2},j} ^{k} V^{x}_{i-\frac{1}{2},j})$, $(\rho_{i,j+\frac{1}{2}}^{k}, V^{y}_{i,j+\frac{1}{2}}) $ and $(\rho_{i,j-\frac{1}{2}}^{k},V^{x}_{i,j-\frac{1}{2}})$ are the values, at time $\tau k$, of $(\rho,V)$ at the interface $x_{i-\frac{1}{2},j}$, $y_{i,j+\frac{1}{2}}$ and $y_{i,j-\frac{1}{2}}$ respectively.

	Using the upwind scheme we have  $\rho_{i+\frac{1}{2},j}^{k}=\rho_{i,j}^{k}$ and $\rho_{i,j+\frac{1}{2}}^{k}=\rho_{i,j}^{k}$.  Substituting in \eqref{eq:disc_continuity},  the density $\rho_{i,j}^{k+\frac{1}{2}}$ can be written as:
	
	\begin{equation}
		\label{sol:continuity}
		\rho_{i,j}^{k+\frac{1}{2}}=\rho_{i,j}^{k}-\frac{\tau}{\Delta x} [\rho_{i,j}^{k} V^{x}_{i+\frac{1}{2},j}-\rho_{i-1,j} ^{k}V^{x}_{i-\frac{1}{2},j}]-\frac{\tau}{\Delta y} [\rho_{i,j}^{k} V^{y}_{i,j+\frac{1}{2}}-\rho_{i,j-1}^{k}V^{x}_{i,j-\frac{1}{2}}] 
	\end{equation}

	We  consider that no flux is entering the room from the walls at $\Gamma_N$. This is equivalent to impose $\rho_{i-\frac{1}{2},j} ^{k}V^{x}_{i-\frac{1}{2},j}=0$ and $\rho_{i,j-\frac{1}{2}} ^{k}V^{y}_{i,j-\frac{1}{2}}=0$ at $i=1$ and $j=1$ respectively.\\
	Finally, let us recall that the values of $h$ and $\tau$ are chosen to satisfy a CFL-type constraint $\max(\Vert V_{i,j}\Vert)\frac{\tau}{h}<\frac{1}{2}$ in order to guarantee the stability of the numerical scheme \eqref{eq:disc_continuity}. We summarize this in the following algorithm:
	\begin{algorithm}[H]
		
		\caption{Prediction step}
		\label{alg:prediction}
		\begin{algorithmic}
			\State $\textbf{1st step.}$ Initialization:  Compute the velocity  $V=(V^x,V^y)$. Choose $\Delta x=\Delta y=h$ and  $\tau$ such $\max(\Vert V_{i,j}\Vert)\frac{\tau}{h}<\frac{1}{2}$  and take a initial density given by $\rho_{i,j}^{k}$ at time $k \tau$.
			\State $\textbf{2nd step.}$ Update the density at time $(k+\frac{1}{2}) \tau$ by
			$$
			\begin{aligned}
				\rho_{i,j}^{k+\frac{1}{2}}=\rho_{i,j}^{k}-\frac{\tau}{\Delta x} [\rho_{i,j}^{k} V^{x}_{i+\frac{1}{2},j}-\rho_{i-1,j} ^{k}V^{x}_{i-\frac{1}{2},j}]-\frac{\tau}{\Delta y} [\rho_{i,j}^{k} V^{y}_{i,j+\frac{1}{2}}-\rho_{i,j-1}^{k}V^{x}_{i,j-\frac{1}{2}}] .
			\end{aligned}
			$$
		\end{algorithmic}
	\end{algorithm}
	\begin{remark}
		The discretization of $\dive(V \: \rho)$ assumes a positive direction for the speed \ie $V^x>0$ and $V^y>0$. However, the scheme can be easily adapted to other cases. For example, if $V^x>0$ and $V^y<0$ for some $(i,j)$, the discretization of $\dive(\rho V)$ becomes:
		\begin{equation}
			\label{divergence2}
			(\dive(V\rho))_{i,j}=\frac{1}{\Delta x} [\rho_{i+\frac{1}{2},j} V^{x}_{i+\frac{1}{2},j}-\rho_{i-\frac{1}{2},j} V^{x}_{i-\frac{1}{2},j}]+\frac{1}{\Delta y} [\rho_{i,j-\frac{1}{2}} V^{y}_{i,j-\frac{1}{2}}-\rho_{i,j+\frac{1}{2}} V^{x}_{i,j+\frac{1}{2}}],
		\end{equation}
	\end{remark}
	\color{black}

	\bigskip 
	Since the obtained density $\rho^{k+\frac{1}{2}}$  may violate the constraint $\rho \leq1$, the next step is to handle congestion by solving the following minimum flow problem
	\begin{equation}
		\label{beckmann1}
		\inf_{(\rho,\Phi)}\left\{  \int_\Om \kk(x)\vert\Phi(x)\vert\dd x:~-\tau \:\dive(\Phi) = \rho^{k+\frac{1}{2}} - \rho\hbox{ in }  \Omega,\ \Phi\cdot \nu=0\hbox{ on }\Gamma_N \mbox{ and } 0<\rho\leq 1\right\},
	\end{equation}
	where  where $\kk\geq 0$ is a continuous function and, for the simplicity of the presentation, we take vanishing $g$ and $\eta$ (see Remark \ref{Nonhometa}).\\

	\textbf{Discretization of the minimum flow problem \eqref{beckmann1} : }First, let us rewrite   \eqref{beckmann1}   in the form
	\begin{equation}
		\label{formulation}
		\M:~~\min_{(\rho,\Phi)} \A(\rho,\Phi) +  \I_{\CC} (\Lambda(\rho,\Phi)),
	\end{equation}
	where (we omit the variable $\tau$ to lighten the notation)
	\[
	\A(\rho,\Phi) =\int_\Om \tau\kk(x)\vert\Phi(x)\vert\dd x + \I_{[0,1]}(\rho),\quad  \Lambda(\rho,\Phi)  = \rho-\tau\dive\Phi  \quad \mbox{and}~\B  = \I_{\left \{  \rho^{k+\frac{1}{2}}\right\}} .
	\]
	This  problem    can be efficiently solved by Chambolle-Pock's primal-dual algorithm (PD) (\cf \cite{c2004}). 
	
	Based on the discrete gradient and divergence operators, we propose a discrete version of $\M$ as follows
	\begin{equation}
		\M_{d}:~\min\limits_{\substack{(\rho,\Phi)}}\Big\{h^2\sum_{i=1}^{m+1}\sum_{j=1}^{n+1}   \tau\kk_{i,j}\Vert \Phi_{i,j}\Vert +\I_{[0,1]}(\rho)+ \I_{\CC}(\Lambda_h (\rho,\Phi)) \Big\} 
	\end{equation}
	where $\CC:=\left\{ (a_{i,j})\: : \: a_{i,j} = \rho_{i,j}^{k+\frac{1}{2}},~~ \forall (i,j)\in\llbracket1,m\rrbracket\times\llbracket1,n\rrbracket \right\},$  $\Lambda_h (\rho,\Phi) = \rho -\tau\dive_h\Phi $ and $\kk_{i,j}$ is the value of $\kk$ in $C_{i,j}$.	In other words, the discrete version $\M_d$ can be written as
	\begin{equation}
		\label{P1}
		\min_{(\rho,\Phi)} \A_h(\rho,\Phi) + \B_h(\Lambda_h(\rho,\Phi)),
	\end{equation}
	or in a primal-dual form as
	\begin{equation}
		\label{eq:P1PD}
		\min_{(\rho,\Phi)}\max_{p} \A_h(\rho,\Phi) + \langle u,\Lambda_{h}(\rho,\Phi)\rangle - \B_{h}^{*}(p),
	\end{equation}
	where
	\begin{equation}
		\label{eq:functionals_h}
		\A_h(\rho,\Phi) = h^2\sum_{i=1}^{m+1}\sum_{j=1}^{n+1}\tau\kk_{i,j}\Vert\Phi_{i,j}\Vert  + \I_{[0,1]}(\rho)~ \mbox{and}~\B_h=\mathbb{I}_{\CC}.
	\end{equation}
	
	Notice that in this case, \eqref{P1} has a dual problem that reads
	\begin{equation}
		\min\limits_{\substack{\rho\in X\\ 0\leq\rho\leq 1}}\max\limits_{\substack{p\in X\\p = 0 \text{ on }  \Gamma_D}} h^{2}\left\{\sum_{i=1}^{m}\sum_{j=1}^{n}p_{i,j}(\rho_{i,j}^{k+\frac{1}{2}} - \rho_{i,j}):~\Vert \nabla_{h} p_{i,j}\Vert \leq \kk_{i,j}\right\}.
	\end{equation}
	Then (PD) algorithm \cite{CP} can be applied to $\M_d$ as follows:
	\begin{algorithm}[H]
		
		\caption{(PD) iterations}
		\label{alg:pd1}
		\begin{algorithmic}
			\State $\textbf{1st step.}$ Initialization:  choose $\alpha,\beta>0$, $\theta\in [0,1]$, $\rho_0, \Phi^0$ and take $u_0 = \Lambda_h (\rho^0,\Phi^0), \: \bar{p}^0 = p^0$
			\State $\textbf{2nd step.}$ For $l\leq \mathrm{Iter}_{max}$ do
			$$
			\begin{aligned}
				(\rho^{l+1},\Phi^{l+1}) &= \prox_{\beta\A_h}\left((\rho^l,\Phi^l) -\beta\Lambda_{h}^{*}(\bar{p}^l)\right);\\
				p^{l+1} &= \prox_{\alpha\B_{h}^{*}}\left(p^l+\alpha \Lambda_{h}(\rho^{l+1},\Phi^{l+1})\right);\\
				\bar{p}^{l+1} &= p^{l+1} + \theta  (p^{l+1}- p^{l}).\\
			\end{aligned}
			$$
		\end{algorithmic}
	\end{algorithm}
	Recall here that the proximal operator   is defined through
	\begin{equation}
		\label{def:prox}
		\prox_{\alpha E}(p) =\underset{q}{\operatorname{argmin}} \frac{1}{2}\|p-q\|^{2}+\eta E(q).
	\end{equation}

	\subsection{Computation of the proximal operators}
	See that  for  the functional $\A_h$ and $\B_{h}^{*}$  can be computed explicitly. Indeed, the functional $\A_h$ is separable in the variables $\rho$ and $\Phi$ : 
	$$\A_h(\rho,\Phi)  = \I_{[0,1]}(\rho) + \Vert\Phi\Vert_{1}.$$ So, $\prox_{\eta \A_h}$ is the some of a projection in the first component and the so-called soft-thresholding. Namely
	\begin{equation}\label{eq:prox_F}
		\left(\prox_{\A_h}(\rho,\Phi)\right)_{i,j} = \left(\max(0,\min(1,\rho_{i,j})), \max(0,1 - \frac{1}{\vert\Phi_{i, j}\vert})\Phi_{i, j} \right) .
	\end{equation}
	As to $\B_{h}^{*},$ in order to compute $\prox_{\alpha\B^{*}_{h}}$, we make use of Moreau's identity
	\begin{equation}
		p = \prox_{\alpha\B^*_h}(p) + \alpha\prox_{\alpha^{-1}\B_h}(p/\eta),
	\end{equation}
	and the fact that $\prox_{\alpha^{-1}\B_h}(a,b)$ is given simply by  the projection onto $\CC$. 	Consequently,
	$$
	\left(\prox_{\alpha\B^*_h}(p)\right)_{i,j} = \left( p_{i,j} - \alpha\proj_{\CC_{i,j}}(p_{i,j}/\alpha)\right) .
	$$
	
	Thus, the details of \hyperref[alg:pd]{Algorithm 3} to solve $\M_d$ are as follow : 
	\begin{algorithm}[H]
		
		\caption{(PD) iterations for $\M_d$}
		\label{alg:pd}
		\begin{algorithmic}
			\State $\textbf{Initialization:}$ Let $k=0$, choose $\alpha,\beta>0$ such that $\alpha\beta\Vert\Lambda_{h}\Vert^2<1$. Choose $\rho^0,\Phi^0$ and  $p^0  = \bar{p}^0 = p_0$.
			\State $\textbf{Primal step:}$ \label{Primal_step}
			\begin{equation}
				(\rho_{i,j}^{l+1},\Phi_{i, j}^{l+1})  = \left(\max\Big(0,\min(1,\rho_{i,j}^{l}-\beta \bar{p}^{l}_{i,j})\Big), \max\Big(0,1 - \frac{1}{\vert\Phi_{i, j}^{l}-\beta\nabla_h \bar{p}_{i,j}^{l}\vert}\Big)\Big(\Phi_{i, j}^{l}-\beta\nabla_h \bar{p}_{i,j}^{l}\Big) \right).
			\end{equation}
			\State $\textbf{Dual step:}$
			\begin{equation}
				\begin{aligned}
					v^{l+1} &= p^{l} + \alpha\rho^{l+1}-\alpha \dive_h(\Phi^{l+1}).\\
					p^{l+1}_{i,j} &= v^{l+1}_{i,j} -\alpha\proj_{\CC_{i,j}}(v_{i,j}^{l+1}/\alpha), \: 1\leq i\leq m, 1\leq j\leq n.
				\end{aligned}
			\end{equation}
			\State $\textbf{Extragradient:}$
			\[
			\bar{p}^{l+1} = 2p^{l+1} - p^{l}.
			\]
		\end{algorithmic}
	\end{algorithm}

	It was shown in \cite{CP} that when $\theta = 1$ and $\alpha\beta\Vert\Lambda_h\Vert^2 < 1$, the sequence $\{(\rho^l,\Phi^l)\}$ converges to an optimal solution of $\M_d$. So in practice, we choose $\alpha>0$ and we take $\beta = 1/(\eta K^2)$, where $K$ is an upper bound of  $\Vert\Lambda_h\Vert$. More precisely, $K = \sqrt{\Vert \nabla_h \Vert^2 + \Vert \id_X\Vert^2}\equiv \Vert \Lambda_h\Vert$. The algorithm was implemented in Matlab and all the numerical examples below were executed on a  2,6 GHz CPU running macOs High Sierra system.

	\begin{remark}[Non-homogeneous Neuman boundary condition : non null $\eta$]\label{Nonhometa}
		In is not difficult to see that in the case of  non null $\eta,$ one can handle this case by considering $\eta$ as a source term on the boundary on $\Gamma_N.$  To avoid numerical computation for the correction  
		we propose to handle the condition 
		$$ -\tau\: \dive(\Phi)  =\rho_{k+1/2} - \rho  \hbox{ in }   \D'(  \Omega )    \hbox { and }  \Phi\cdot \bnu = \eta  \hbox{ on }\Gamma_N.$$      
		as 
		$$ -\tau\: \dive(\Phi)  =\rho_{k+1/2} +\tau\: \eta - \rho  \hbox{ in }   \D'(  \overline \Omega \setminus \Gamma_D)   .$$     
		In other words, at each iteration  we take $\rho^{l} +\tau\: \eta $   instead of $\rho^{l} $ in the Algorithms \hyperref[alg:pd]{1}-\hyperref[alg:pd1]{2}.
		
	\end{remark}

	\section{Numerical simulations}\label{section:examples}

	In this section we present several examples to illustrate our approach \footnote{Demonstration videos are available at \url{https://github.com/enhamza/crowd-motion}}. We first examine the scenario of  evacuation of a population $\rho_0$ from a the domain $\Om\subset\R^2$ via an exit $\Gamma_D$ with  different velocities. In the last two examples we compare our approach to the one in \cite{MRS1,MRS2}, the configuration in the first one is similar to the previous  ones, \ie the crowed is initially located in a part of the room $\Om$ and try to escape through the doors, while in the second example the domain $\Om$ is constituted by two rooms connected by a  "bridge". In all these examples, the velocity field $V$ derives from a potential $\varphi$ that is considered as the distance function to the door $\Gamma_D$ and is computed by solving the eikonal equation
	\begin{equation}
		\left\lbrace
		\begin{aligned}
			\Vert\nabla \varphi\Vert &= f(\x)\\
			\varphi_{\mid\Gamma_D} &= 0,
		\end{aligned}
		\right.
	\end{equation}
	using  the primal-dual method proposed in \cite{EINHJ} (see also \cite{EINsfs}), where $f\geq 0$ is a continuous function that will be precised for each example. All the tests of  this section are performed with a mesh size $h= 0.01$ and a timestep $\tau = 0.004$.  Moreover, the corresponding velocities are displayed in red. 
	\vspace*{-0.5cm}
	\subsection{One room evacuation}
	In this first example (\cf Figure-\ref{fig:example1}), the initial density $\rho_0$ is given by $\rho_0(\x) = \mathbf{1}_{S_1}(\x)+\mathbf{1}_{S_2}(\x)$ with $S_1 = [0,\frac{1}{2}]\times [0,\frac{1}{3}]$ and  $S_2 =[0,\frac{1}{2}]\times[\frac{2}{3},1] $. The exit is given by $\Gamma_D = \{1\}\times [0.4,0.6]$ and $f\equiv 1$.
		\vspace*{-8mm}
	\begin{figure}[H]
		\includegraphics[width=.84\textwidth,center]{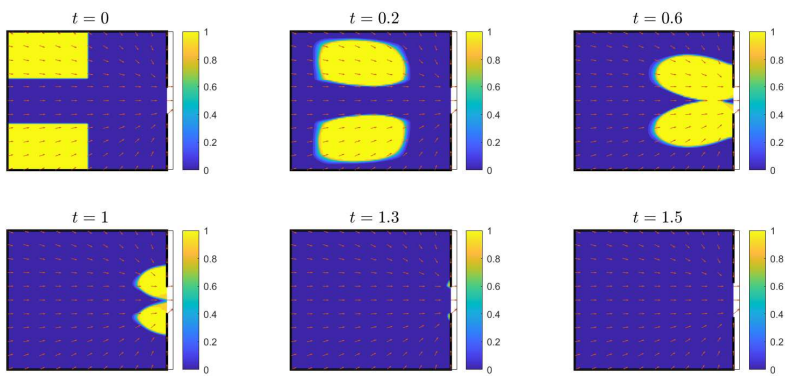}
		\caption{The crowed density $\rho$ computed at 6 different timesteps with  $T=2$ and $f\equiv 1$.}
		\label{fig:example1}
	\end{figure}
	
	In the second example (\cf Figure-\ref{fig:example2}),  the initial density is $\rho_0(\x) = \mathbf{1}_{S_1}(\x)$ with $S_1 = [0,\frac{1}{2}]\times [0,1]$ and $\Gamma_D = \left(\{1\}\times[0,0.4]\right)\cup\left(\{1\}\times[0.9,1]\right)$ and $f(\x) = e^{-3\times\left((x-\frac{1}{2})^2 + (y-\frac{1}{2})^2\right)}$.
	
	\begin{figure}[H]
		\includegraphics[width=.84\textwidth,center]{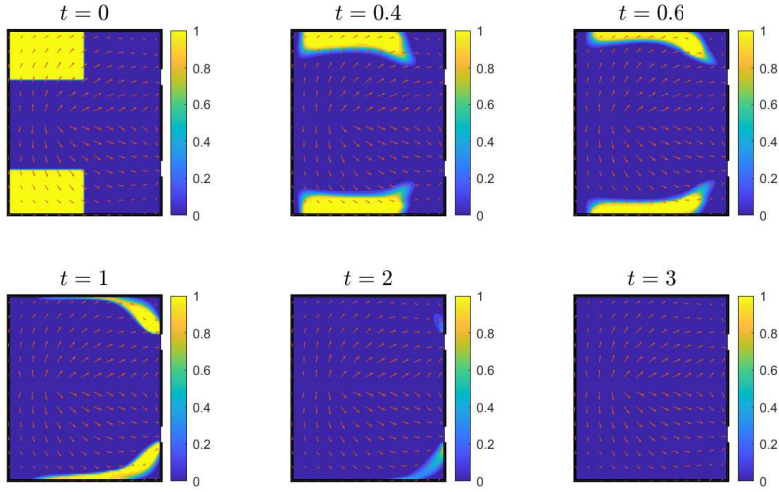}
		\caption{The crowed density $\rho$ computed at 6 different timesteps with  $T=3$ and $f(\x) = e^{-3\times\left((x-\frac{1}{2})^2 + (y-\frac{1}{2})^2\right)}$.}
		\label{fig:example2}
	\end{figure}
	In this example, the function $f$ has a bump in the middle of the domain, and we can observe in Figure-\ref{fig:example2} that the population is avoiding this region while heading the doors.\\

	\begin{figure}[!ht]
		\includegraphics[width=.84\textwidth,center]{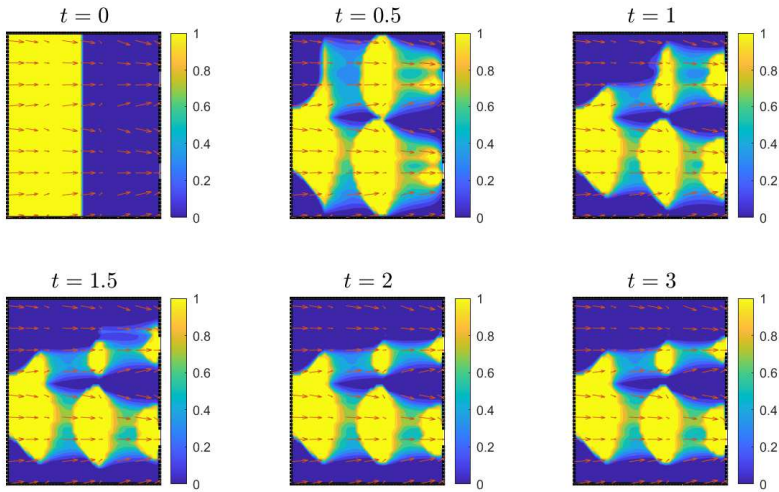}
		\caption{The crowed density $\rho$ computed at 6 different timesteps with  $T=3$ and  $f(\x) = |\cos(3x+5y)|+0.2$.}
		\label{fig:example3}
	\end{figure}
	In the third example (\cf Figure-\ref{fig:example3}),,   the initial condition for the density is $\rho_0(\x) = \mathbf{1}_{S_1}(\x)$ with $S_1 = [0,\frac{1}{2}]\times [0,1]$ and $\Gamma_D = \left(\{1\}\times[0.2,0.3]\right)\cup\left(\{1\}\times[0.7,0.8]\right)$ and $f(\x) = |\cos(3x+5y)|+0.2$. 	The source term is located on the entry of the domain at $\Gamma_S=\{0\}\times[0.3,0.6]$. \\ 
	 In this example one sees that the vector filed of spontaneous velocity has small values in   successive (periodic)  regions. This produce in turns   successive congestion zones. Moreover, the system reaches its equilibrium after $t=2$. One can notice that no variation in the density is observed as the number of persons leaving the room is equal to the number of person entering the room.

	\color{black}
	\subsection{Homogeneous case vs quadratic case}
	As we pointed out in Subsection-\ref{subsection:pde}, in the case where $F(x,\xi) =\vert \xi\vert$, our model is connected to the gradient flow in the Wasserstein space equipped with $\Wass_1$. Whereas the case $F(x,\xi) = \frac{1}{2}\vert\xi\vert^{2}$ can be related to the gradient flow in the Wasserstein space equipped with the $\Wass_2$ distance (\cf \cite{MRS1,MRS2}), where decongestion is performed using the Laplace operator as we discussed in Remark-\ref{remark:quadraric_case}. The solution of the continuity equation is computed first (prediction step), then it is projected onto the set of admissible densities with respect to $\Wass_2$-Wasserstein distance (correction step). Using our approach, this can be simply solved by changing the functional $\A_h$ to $\A_h(\rho,\Phi)  = \I_{[0,1]}(\rho) + 1/2\Vert\Phi\Vert_{2}^{2}$ and modifying formula \eqref{eq:prox_F} using the fact that 
	\begin{equation}\label{eq:prox_F2}
		\prox_{\frac{\sigma}{2}\Vert .\Vert_{2}^{2}} (\Phi)= \frac{1}{1+\sigma} \Phi.
	\end{equation}
	To observe differences between the two methods, we consider two examples. In the first one (\cf Figure-\ref{fig:comparison_eg1}), the initial density is $\rho_0(\x) = \mathbf{1}_{S_1}(\x)$ with $$S_1 = [0,\frac{1}{2}]\times [0,1]~\mbox{and}~\Gamma_D = \left(\{1\}\times[0,0.4]\right)\cup\left(\{1\}\times[0.9,1]\right).$$.
	\begin{figure}[H]
		\includegraphics[width=1.\textwidth,center]{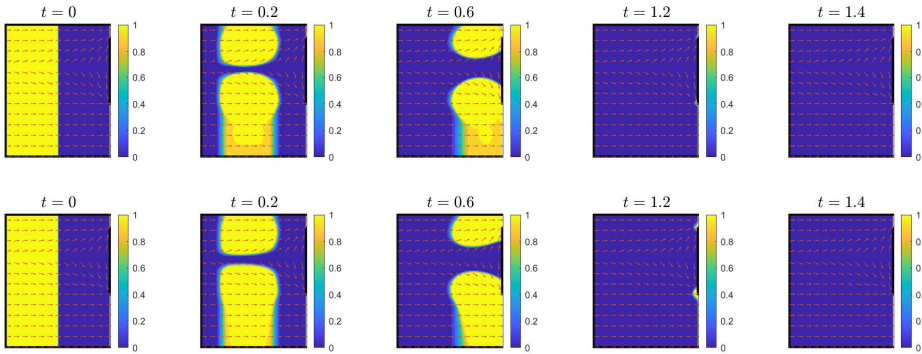}
		\caption{The distribution of crowd at equivalent timesteps with $T = 2$. Top row: result using our approach. Bottom row: result using the Laplacian.}
		\label{fig:comparison_eg1}
	\end{figure}
	\begin{figure}[H]
		\includegraphics[width=0.45\textwidth]{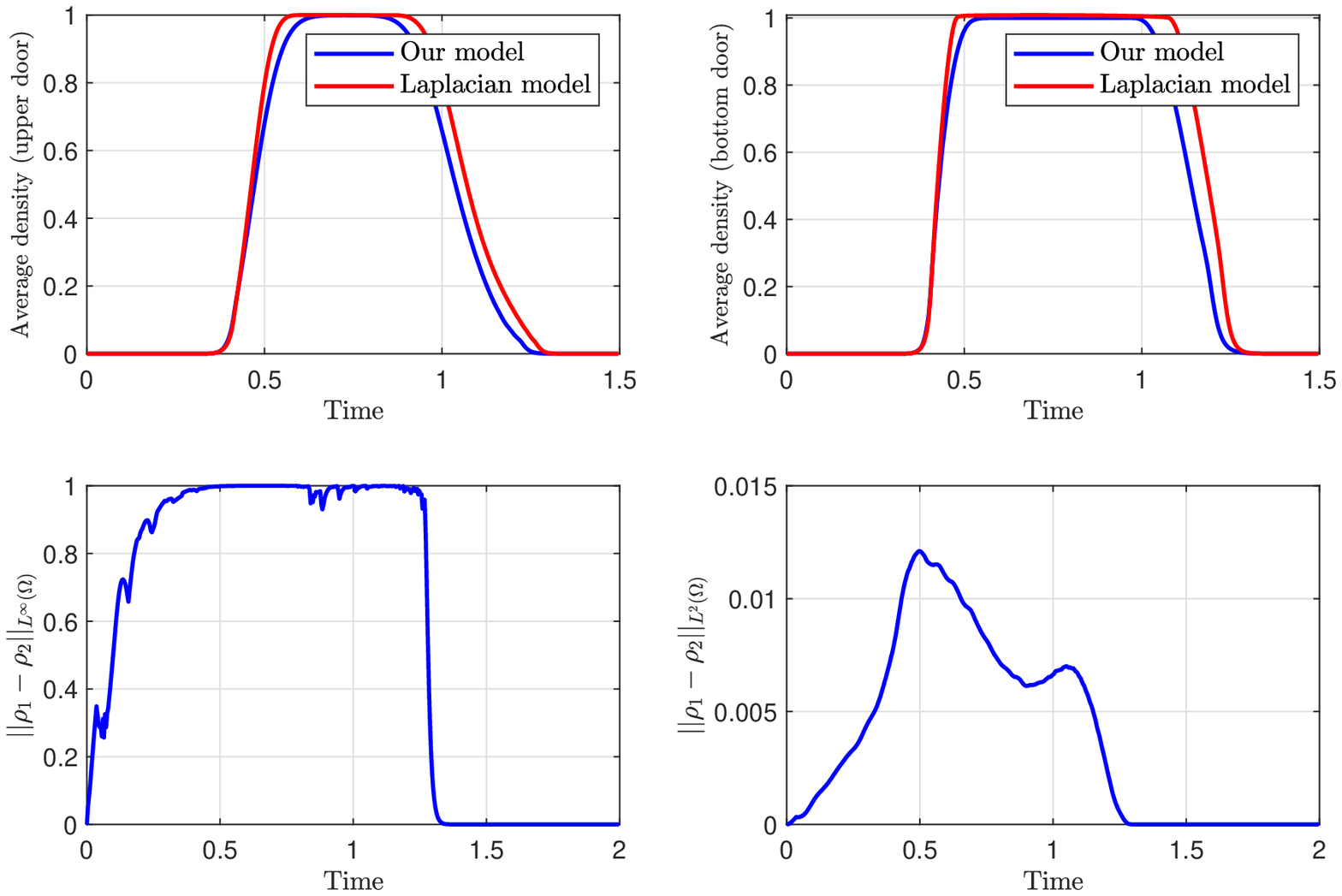}
			\caption{Top: Variation of the average density over time for the two models at the exist doors. Bottom: Variation of $||\rho_1-\rho_2  ||_{L^{\infty}(\Omega)}$ and $||\rho_1-\rho_2  ||_{L^{2}(\Omega)}$  as a function of time for the two rooms case. $\rho_1$ is the solution obtained by our approach and $\rho_2$ the solution obtained by the Laplacian model. }
		\label{fig:norm_comp1}
	\end{figure}

		Now, we consider a domain $\Om = [0,1]^2 = \Om_l \cup \Om_r$ composed of two rooms linked by a bridge in the spirit of \cite{Leclerc&al}, where $\Om_l = [0,0.4] \times [0,1]$ and $\Om_r = [0.6,1] \times [0,1]$.
		The initial density $\rho_0$ is located at the left room and is given by $\rho_0(\x) = \mathbf{1}_{S}(\x)$ with $S = [0,0.4]\times [0,1]$ . The exit is given by the two end points $(1,0)$ and $(1,1)$, that is $\Gamma_D = \{ (1,0),(1,1)\}$.

		\begin{figure}[H]
			\centering
			\includegraphics[width=1.\textwidth,center]{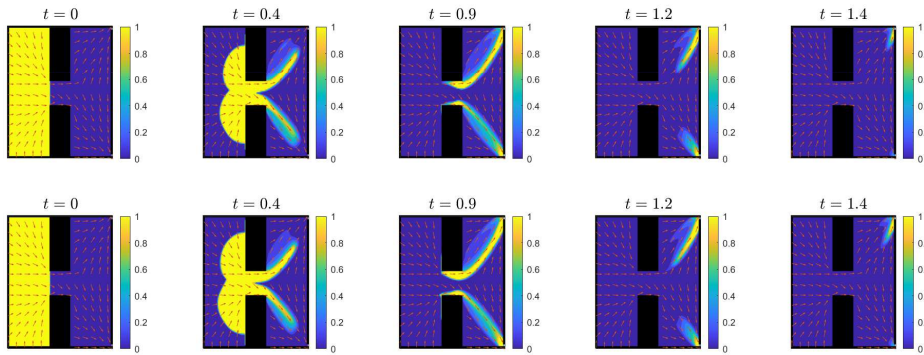}
			\caption{The distribution of the crowd over the domain at equivalent timesteps. Top row: result using our approach. Bottom row: result using the Laplacian.}
			\label{fig:comparison_eg2}
		\end{figure}
		\begin{figure}[H]
			\includegraphics[width=0.45\textwidth]{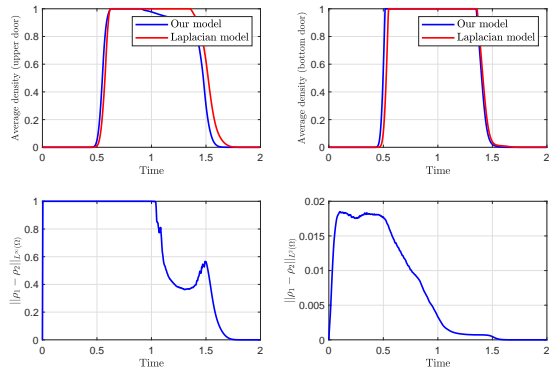}
			\caption{Top: Variation of the average density over time for the two models at the exist doors. Bottom: Variation of $||\rho_1-\rho_2  ||_{L^{\infty}(\Omega)}$ and $||\rho_1-\rho_2  ||_{L^{2}(\Omega)}$  as a function of time for the two rooms case. $\rho_1$ is the solution obtained by our approach and $\rho_2$ the solution obtained by the Laplacian model. }
			\label{fig:norm_comp2}
		\end{figure}
		Figures-\ref{fig:comparison_eg1}-\ref{fig:comparison_eg2} provide a comparison between our method to and one using the Laplace operator in equivalent timesteps. Overall, both models behave similarly except that our model seems to perform faster evacuation. In most of the timesteps examples, it is difficult to visualize differences in of the evolution of the crowd only through the figures. Yet, we can observe this by measuring the $L^{\infty}$ and $L^{2}$ norms of the obtained solutions as well as the variation of the average density over time for the two models at the exist doors. Thanks to Figures-\ref{fig:norm_comp1}-\ref{fig:norm_comp2}, one can clearly notice that our model is faster than the Laplacian model in achieving population evacuation, as the blue curve (our model) remains under the red curve (Laplacian model) over all the time period.

		\subsection{Evacuation with path obstacles} 
		In this section,  we analyse the evacuation process in the presence of in-domain obstacles.  At the microscopic level, it was shown in \cite{alreda}  that pedestrians might be blocked from exiting the room in case where no obstacle is placed in front of the exist. The reason is that pedestrians start to push each other once near to the exist blocking the continuation of the evacuation process. The authors \cite{alreda} have concluded that placing an obstacle just in front of the exist regulates the evacuation and avoids blocking of pedestrians. To observe the effect of placing an obstacle in front in the exist on the fluidity and speed of the evacuation in the macroscopic case, we consider the following example in $\Om = [0,1]^2$ where the obstacle is placed at the region $[0.8,0.9]\times [0.2,0.7]$. The initial density $\rho_0$ is located at the left room and is given by $\rho_0(\x) = \mathbf{1}_{S}(\x)$ with $S = [0,0.5]\times [0,1]$ . The exit is given by $\Gamma_D ={1}\times [0.4,0.6]$.
		
		\begin{figure}[H]
			\centering
			\includegraphics[width=1.\textwidth,center]{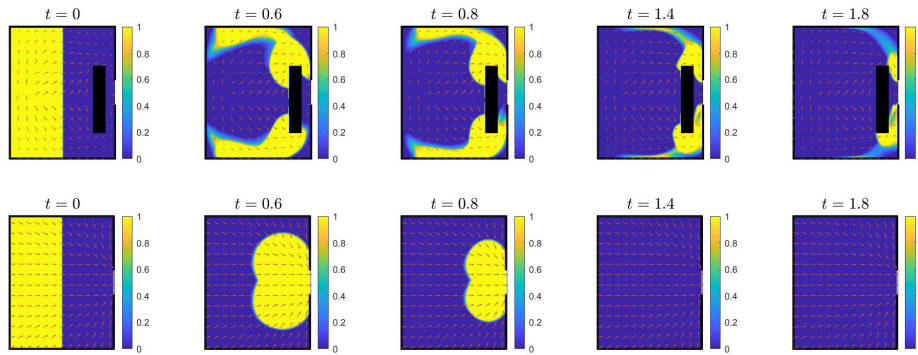}
			\caption{The distribution of the crowd over the domain at equivalent timesteps.}
			\label{fig:comparison_obs}
		\end{figure}
		
		As shown in Figure-\ref{fig:comparison_obs}, we can notice that after $t=1.4$, the room is completely evacuated  in absence of the obstacle in front of the exist. However,  when considering obstacle we can notice that the evacuation is partial and some pedestrian are stuck in the room. In fact, placing an obstacle slowed down the evacuation.\\
		Unlike the microscopic case, adding an obstacle in the macroscopic case have had negative effects on the evacuation process due to the continum model of the density.
		\color{black}
		\section*{Acknowledgment} The work of H.E was partially supported by the ANR grant, reference ANR-20-CE38-0007.

		\begin{appendices}
			
			\section{On the discrete operators}\label{subsection:discrete_op}\hfill\\
			In this section, we recall some  details concerning  the discrete divergence and gradient operators that were used in Section-\ref{section:numerics}. First, let us recall that the space $X = \R^{m\times n}$  is equipped with a scalar product and an associated norm as follows:
			\[
			\langle u,v \rangle = h^2\sum_{i=1}^{m}\sum_{j=1}^{n} u_{i,j} v_{i,j} \quad \mbox{ and } \quad \Vert u\Vert = \sqrt{\langle u, u\rangle},
			\] 
			where $h$ is a given mesh size. Following the definition of the discrete divergence operator given in \eqref{eq:div1}, the discrete gradient  $\nabla_h: X \longrightarrow Y=\R ^{(m+1)\times n} \times \R ^{m\times (n+1)} $ is given by $(\nabla_h u)_{i,j} = \Big((\nabla_h u)^{1}_{i,j},(\nabla_h u)^{2}_{i,j}\Big)$, where 
			\begin{equation}
				\label{eq:grad_h2}
				\begin{aligned}
					(\nabla_{h}u)^1_{i,j}&= ~-^tD^1_p u(i,j) ,~\text{if} \: ((m+\frac{1}{2} )h,j h)  \in \Gamma_D,\\
					(\nabla_{h}u)^1_{i,j}&= ~-^tD^1_mu(i,j) ,  ~\text{if} \: ((m+\frac{1}{2} )h,j h)  \in \Gamma_N,\\
					(\nabla_{h}u)^2_{i,j}&= ~-^tD^2u(i,j).
				\end{aligned}
			\end{equation}
			
			and the matrices $D^{1}_{P}, D^{2}_{m}$, $D^{2}$ are given by
			\[
			D^{1}_{p}=\scriptsize{\begin{pmatrix}
					0 & 1/h &0 &\cdots &&  & 0 \\
					0 & - 1/h &  1/h &0 &\cdots&   &0 \\
					0 &0 & - 1/h &  1/h &0 &\cdots &0 \\
					\vdots &&  \vdots & \ddots &&  \vdots \\
					0 & 0& \cdots & &0& - 1/h &  1/h
			\end{pmatrix}}
			\]
			\[
			D^1_m=\scriptsize{\begin{pmatrix}
					0 & 1/h &0 &\cdots &&  & 0 \\
					0 & - 1/h &  1/h &0 &\cdots&   &0 \\
					0 &0 & - 1/h &  1/h &0 &\cdots &0 \\
					\vdots &&  \vdots & \ddots &&  \vdots \\
					0 & 0& \cdots & &0& - 1/h &  0
			\end{pmatrix}}
			\]
			and
			\[
			D^2=\scriptsize{\begin{pmatrix}
					0 & 1/h &0 &\cdots &&  & 0 \\
					0 & - 1/h &  1/h &0 &\cdots&   &0 \\
					0 &0 & - 1/h &  1/h &0 &\cdots &0 \\
					\vdots &&  \vdots & \ddots &&  \vdots \\
					0 & 0& \cdots & &0& - 1/h &  0
			\end{pmatrix}}.
			\]

			This being said, we check easily that $-\odiv_h$ and $\nabla_h$ are in duality. Moreover, we recall the following

			\begin{proposition}(\cite{c2004,CP}) Under the above-mentioned definitions and notations, one has that
				\begin{itemize}
					\item The adjoint operator of $\nabla_h$ is $\nabla^{*}_h = -\operatorname{div}_h.$
					\item Its norm satisfies: $\Vert \nabla_h \Vert^2 = \Vert \operatorname{div}_h\Vert^2\leq 8/h^2$.
				\end{itemize}
			\end{proposition}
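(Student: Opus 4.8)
The plan is to verify the two assertions separately: the adjointness by discrete summation by parts, and the norm estimate by a direct Cauchy--Schwarz bound on each difference quotient. First I would establish $\nabla_h^{*} = -\operatorname{div}_h$. The inner products on $X$ and on $Y$ both carry the weight $h^2$, and, comparing \eqref{eq:div_matrix} with \eqref{eq:grad_h}, the discrete gradient is assembled from exactly the negative transposes of the matrices $D_p^1$, $D_m^1$, $D^2$ that assemble the discrete divergence in \eqref{eq:div1}. Hence for any $u\in X$ and $\Phi\in Y$ one expands $\langle \nabla_h u,\Phi\rangle_Y$ over its two components and applies the elementary matrix identity $\langle A v, w\rangle = \langle v, {}^tA w\rangle$ coordinatewise; the common factor $h^2$ cancels consistently between the two inner products, leaving $\langle u, -\operatorname{div}_h \Phi\rangle_X$. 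Since an operator and its adjoint share the same operator norm, the equality $\Vert \nabla_h\Vert^2 = \Vert \operatorname{div}_h\Vert^2$ is then immediate.

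For the bound I would estimate $\Vert \nabla_h u\Vert_Y^2$ directly. Each interior entry of the horizontal component is a difference quotient $(u_{i,j}-u_{i-1,j})/h$, so $h^2\sum_{i,j}\vert (\nabla_h u)^1_{i,j}\vert^2 = \sum_{i,j}\vert u_{i,j}-u_{i-1,j}\vert^2$. Applying $\vert a-b\vert^2\leq 2(\vert a\vert^2+\vert b\vert^2)$ and noting that each nodal value $u_{i,j}$ occurs in at most two such horizontal differences, this is bounded by $4\sum_{i,j}\vert u_{i,j}\vert^2 = (4/h^2)\Vert u\Vert_X^2$. The identical argument applied to the vertical component $(\nabla_h u)^2$ produces another $(4/h^2)\Vert u\Vert_X^2$. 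Adding the two contributions gives $\Vert \nabla_h u\Vert_Y^2\leq (8/h^2)\Vert u\Vert_X^2$, whence $\Vert \nabla_h\Vert^2\leq 8/h^2$.

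The main obstacle is the bookkeeping in the first part: one must track the boundary rows carefully so that the discrete summation by parts leaves no residual boundary contribution. This is precisely the role of the split between $D_p^1$, which carries the Dirichlet column attached to $\Gamma_D$, and $D_m^1$, which carries the Neumann column attached to $\Gamma_N$, together with the vanishing of the normal components imposed after \eqref{eq:div1}. Once these boundary conventions are in place the summation by parts is exact, and the adjoint identity holds without remainder. The norm estimate itself is routine, the only care being to keep the $h^2$ weights of the two inner products matched throughout the computation.
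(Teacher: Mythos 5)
Your proposal is correct and is exactly the standard argument: the paper itself does not reproduce a proof of this proposition but simply cites Chambolle (2004) and Chambolle--Pock, and the proof given there is the one you outline --- summation by parts using the transposed matrices $D^1_p$, $D^1_m$, $D^2$ with the matching $h^2$ weights for the adjoint identity, and the bound $\vert a-b\vert^2\leq 2(\vert a\vert^2+\vert b\vert^2)$ with each nodal value counted at most twice per direction for the estimate $\Vert\nabla_h\Vert^2\leq 8/h^2$. Your handling of the boundary rows (where dropping the vanishing normal components only decreases the sum) and the use of $\Vert\nabla_h\Vert=\Vert\nabla_h^{*}\Vert$ to get the equality of norms are both sound.
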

			
			\section{Discretization of the eikonal equation:}\label{subsection:eikonal}\hfill\\	  
			For a self-contained presentation, let us recall our main approach to compute the velocity field $V$ by solving the eikonal equation \eqref{hjk}. As pointed out in \cite{EINsfs} (see also \cite{EINHJ}), the solution $\dist$ of \eqref{hjk} can be obtained by solving
			\begin{equation}\label{eq:distance}
				\max_{u\in W^{1,\infty}(\Omega)}\left\{ \int_{\Omega} u \dd \x: \vert\nabla  u\vert \leq f,~u = 0~\mbox{on}~\Gamma_D\right\} 
			\end{equation}
			which can be written, at a discrete level, as
			\begin{equation}
				\label{P1}
				\min_{u\in X} \A_h(u) + \B_h(\nabla_h u),
			\end{equation}
			where
			\begin{equation}
				\label{functionals:F_G}
				\A_h(u) = \begin{cases} - h^2\sum_{i=1}^{m}\sum_{j=1}^{n} u_{i,j} &\text{ if } u_{i,j}=0 \:\: \forall (i,j) \in D_d\\
					+\infty & \text{ otherwise }\end{cases},~\mbox{and}~\B_h=\mathbb{I}_{B(0,f)},
			\end{equation}
			where $D_d=\left\{(i, j): (ih,jh)\in\Gamma_D\right\}$ the indexes whose spatial positions belong to $\Gamma_D$ and $B(0,f)$ is the unit ball of radius $f$. Then we apply Algorithm-\ref{alg:pd1} with the functionals $\A_h$ and $\B_h$ above.

			
			\section{Duality results}\label{subsection:duality}\hfill\\	  
			The idea for the proof of Lemma \ref{phj} goes back to   \cite[Theorem 3.10]{EINHJ}.   The aim is to define a convex and l.s.c functional $\mathcal H \: :\:  \mathcal M_b(\overline \Omega) \mapsto  ]-\infty,\infty]$  such that 
			$$\mathcal H(0)=  \inf_{\tau \Phi\in \F^q(\tilde \rho-\rho)}\left \{  \tau \int_\Omega \kk(x)\: \vert \Phi(x)\vert\: \dd x -\tau \int_{\Gamma_D} g(x)\: \Phi\cdot \bnu \:     \dd x   \right\}   $$  
			and  
			$$ \sup_{p\in \mathcal C(\overline \Omega)} -\mathcal H^{*}(p)  = 	\max_{p\in \G_\kk }\left \{   \int (\tilde \rho-\rho)\: p\:  \dd x   - \tau\:  \int_{\Gamma_N} p\: \eta \: \dd x \right\} .$$ 
			Then conclude by classical duality results 
			\begin{equation} \label{perturbduality}
				\mathcal H(0) = \mathcal H^{**}(0) = \sup_{ p\in \mathcal C(\overline \Omega)} -\mathcal H^{*}(p) .
			\end{equation} 
			One sees that in order to built $\mathcal H$, we need to use vector fields   whose divergences are Radon measures (\cf \cite{Chen&Frid}).    
			Thanks to \cite{Chen&Frid},  we know  that  for any   $\Phi\in L^1(\Omega)^N$  such that $ \dive\Phi =: \mu_\Phi\in \mathcal M_b(\Omega),$ in the sense of $\D'(\Omega),$      the normal trace of $\Phi$   is well defined on the boundary of $\Omega$.   Indeed, for such $\Phi,$ we have $\Phi \cdot \bnu \: :\: \Lip(\partial\Omega )\to \R$ is  continuous linear functional and satisfies 
			\begin{equation}\label{tracemes}
				\langle \Phi \cdot \bnu ,\xi_{\mid\partial \Omega}\rangle =  \int_{  \Omega} \xi\: \dd\mu_\Phi  + \int_{\Omega} \Phi\cdot\nabla \xi\:\dd x ,\quad \hbox{ for any }  \xi\in \mathcal C^1 (\overline{\Omega})  .
			\end{equation}
			We will denote again $ \langle \Phi \cdot \bnu ,\xi_{\mid\partial \Omega}\rangle =: \int_{\partial \Omega} \Phi \cdot \bnu  \: \xi\: \dd x.$ 
			As for $H_{\hbox{div}}$ vector valued field, it is possible to define the restriction of the normal trace of $\Phi$ on    $\Gamma_D$ by working with  Lipschitz  continuous     test functions which vanishes on $\Gamma_N. $
			For any $g \in \Lip(\Gamma_D)$ such that there exists $\tilde g\in \mathcal C^1(\overline \Omega),$ satisfying 
			$$\tilde g = g \hbox{ on }\Gamma_D\hbox{ and }\tilde g =0\hbox{ on }\Gamma_N.$$ 
			For such $g,$ we'll define  
			$\int_{\Gamma_D} \Phi \cdot \bnu  \: g\: \dd x :=  \langle \Phi \cdot \bnu ,\tilde g_{\mid\partial \Omega}\rangle $, for any $\Phi\in L^1(\Omega)^N$; \ie 
			\begin{equation}\label{traceformula}
				\int_{\Gamma_D} \Phi \cdot \bnu  \: g\: \dd x =   \int_{  \Omega} \tilde g\:\dd\mu_\Phi    + \int_{\Omega} \nabla\tilde g  \cdot \Phi \: \dd x. 
			\end{equation} 
			So, for any $\mu\in \mathcal M_b(\overline \Omega),$ we can define 
			$$\F (\mu):=\Big\{  \Phi\in L^{1}(\Omega)^N \: :\:  - \dive(\Phi)  = \mu\res \Omega     \hbox{ in }      \Omega    \hbox{ and }    \Phi\cdot \bnu =\eta+\mu\res \Gamma_N \hbox{ on } \Gamma_N    \Big\}.  $$ 
			Here, the condition  $- \dive(\Phi)  = \mu\res \Omega$  in $ \Omega$ and  $  \Phi\cdot \bnu =\eta+\mu\res \Gamma_N$  on  $ \Gamma_N,$ needs to be understood in the sense   
			\begin{equation}
				\int_\Omega \Phi\cdot \nabla \xi \: \dd x = \int_{\overline \Omega }  \xi\: \dd\mu  + \int_{\Gamma_N} \eta\: \xi\: \dd x  , \quad \hbox{ for any }\xi\in \Lip(\Omega)\hbox{ s.t. }\xi_{\mid\Gamma_D}=0.
			\end{equation}

			\begin{proposition}
				\label{phj1}
				For any $\mu\in \mathcal M_b(\overline \Omega)$,  we have 
				\begin{equation}\label{std11}
					\begin{array}{l}
						\inf_{\tau\Phi\in\F  (\mu)}\left \{ \tau \int_\Omega \kk(x)\: \vert \Phi(x)\vert\: \dd x -\tau\int_{\Gamma_D} g(x)\: \Phi\cdot \bnu \:     \dd x   \right\} \\  \\
						\hspace*{1cm}= 	\max_{p\in \G_\kk }\left \{   \int_{\overline \Omega}  p\:  \dd\mu   - \tau\:  \int_{\Gamma_N} p\: \eta \: \dd x \right\}   . 
					\end{array} 
				\end{equation} 
			\end{proposition}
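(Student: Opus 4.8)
The plan is to read \eqref{std11} as a strong-duality identity and prove it by perturbation duality (Fenchel--Rockafellar / Fenchel--Moreau) in the spirit of \cite[Theorem 3.10]{EINHJ}, the genuinely new ingredient compared with that reference being the prescribed Neumann flux $\eta$ on $\Gamma_N$. Since the infimum on the left need not be attained by an $L^s$ field, the correct functional setting is that of divergence--measure fields: $\Phi\in L^1(\Omega)^N$ with $\dive\Phi\in\mathcal M_b(\Omega)$, for which \eqref{tracemes}--\eqref{traceformula} (following \cite{Chen&Frid}) give a meaning to the normal trace on $\Gamma_D$ and $\Gamma_N$ and hence to both boundary integrals appearing on the left of \eqref{std11}. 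This is exactly why the perturbation must be performed over $\mathcal M_b(\overline\Omega)$, whose predual $\mathcal C(\overline\Omega)$ will contain the dual variable $p$.

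The first step is to introduce the value (marginal) function $\mathcal H:\mathcal M_b(\overline\Omega)\to\,]-\infty,\infty]$ obtained by perturbing the source measure in the balance constraint,
\[
\mathcal H(\lambda):=\inf\Big\{\tau\int_\Omega\kk|\Phi|\,\dd x-\tau\int_{\Gamma_D} g\,\Phi\cdot\bnu\,\dd x:\ \tau\Phi\in\F(\mu+\lambda)\Big\},
\]
so that $\mathcal H(0)$ is precisely the left-hand side of \eqref{std11}. I would then verify that $\mathcal H$ is convex (the cost is convex in $\Phi$ and the constraint defining $\F(\mu+\lambda)$ is jointly affine in $(\Phi,\lambda)$), proper (its domain is nonempty since $\F(\mu)\neq\emptyset$), and weakly-$*$ lower semicontinuous (using lower semicontinuity of $\Phi\mapsto\int_\Omega\kk|\Phi|$ and weak-$*$ closedness of the constraint in the divergence--measure topology). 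These three properties make $\mathcal H$ equal to its biconjugate, so that the Fenchel--Moreau theorem gives $\mathcal H(0)=\mathcal H^{**}(0)=\sup_{p\in\mathcal C(\overline\Omega)}\big(-\mathcal H^*(p)\big)$, which is the abstract duality \eqref{perturbduality}.

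The heart of the matter is the explicit computation of $\mathcal H^*$. Forming the Lagrangian with a multiplier $p\in\mathcal C(\overline\Omega)$ for the perturbed balance law and integrating by parts via \eqref{traceformula}, the supremum decouples into an interior and a boundary contribution. The interior part is $\int_\Omega\sup_{v\in\RR^N}\big(v\cdot\nabla p-\kk\,|v|\big)\,\dd x$, which is finite (and equal to $0$) precisely when $|\nabla p|\le\kk$ a.e.\ in $\Omega$; this forces $p\in W^{1,\infty}(\Omega)$ and encodes the gradient constraint of $\G_\kk$. On $\Gamma_D$ the normal trace of $\Phi$ is left free, and the boundary term collects as an integral of $(p-g)\,\Phi\cdot\bnu$; finiteness of the supremum over this unconstrained trace forces $p=g$ on $\Gamma_D$, completing the membership $p\in\G_\kk$. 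The prescribed flux $\eta$ on $\Gamma_N$ produces the remaining linear term, so that $-\mathcal H^*(p)$ coincides with $\int_{\overline\Omega}p\,\dd\mu-\tau\int_{\Gamma_N}\eta\,p\,\dd x$ on $\G_\kk$ and equals $-\infty$ otherwise. Substituting into $\mathcal H(0)=\sup_p\big(-\mathcal H^*(p)\big)$ yields the right-hand side of \eqref{std11}, but a priori only as a supremum.

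Finally I would promote this supremum to a maximum. Each $z\in\G_\kk$ satisfies $z=g$ on $\Gamma_D$ and $|\nabla z|\le\kk\le\max_{\overline\Omega}\kk$, so $\G_\kk$ is a uniformly Lipschitz family pinned to $g$ on $\Gamma_D$, hence uniformly bounded; by the Ascoli--Arzel\`a theorem it is compact for uniform convergence, and it is closed under that convergence because the bound $|\nabla z|\le\kk$ passes to the limit. As the functional $p\mapsto\int_{\overline\Omega}p\,\dd\mu-\tau\int_{\Gamma_N}\eta\,p\,\dd x$ is continuous for uniform convergence, the supremum is attained, which gives the $\max$ in \eqref{std11}. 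I expect the genuine difficulty to lie in the boundary part of the conjugate computation: rigorously extracting, through the Chen--Frid normal-trace theory \cite{Chen&Frid}, the exact Dirichlet condition $p=g$ on $\Gamma_D$ (rather than an inequality) from the free normal trace and the gain term $g$, while simultaneously keeping track of the singular boundary part of $\dive\Phi$; establishing the weak-$*$ lower semicontinuity of the marginal function $\mathcal H$ is the second point that requires care.
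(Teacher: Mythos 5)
Your overall architecture is the paper's: perturb the source measure over $\mathcal M_b(\overline\Omega)$, define the marginal functional $\mathcal H$, invoke Fenchel--Moreau via \eqref{perturbduality}, and compute the conjugate to recover $\G_\kk$. But there is a genuine gap in the step you yourself flag as the difficulty: extracting the Dirichlet condition $p=g$ on $\Gamma_D$ from the ``free'' normal trace. The normal trace on $\Gamma_D$ is not actually free: for every $\tau\Phi\in\F(\mu)$, the Gauss identity \eqref{tracemes} applied with $\xi\equiv 1$ pins the \emph{total} flux $\int_{\Gamma_D}\Phi\cdot\bnu\,\dd x$ to a fixed number determined by $\mu$ and $\eta$; only the distribution of that flux along $\Gamma_D$ can be varied (by adding divergence-free correctors with vanishing normal trace on $\Gamma_N$). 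Consequently the boundary term $\tau\int_{\Gamma_D}(g-p)\,\Phi\cdot\bnu\,\dd x$ blows up only when $p-g$ is \emph{non-constant} on $\Gamma_D$; if $p=g+c_0$ it stays finite and contributes $\tau c_0$ times the fixed total flux. Your conjugate computation therefore yields the constraint $p=g+\mathrm{const}$ on $\Gamma_D$ together with an extra linear term in the constant, and the resulting ``dual'' is unbounded above (take $c_0\to\pm\infty$) whenever $\mu(\overline\Omega\setminus\Gamma_D)+\tau\int_{\Gamma_N}\eta\neq 0$ --- e.g.\ already for $\mu=\delta_{x_1}$, $\eta=0$. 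So the identity \eqref{std11} cannot be reached by this mechanism alone.

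The paper closes exactly this hole by building the Dirichlet condition into the perturbation rather than into the trace: its $\mathcal H$ carries the additional term $\int_{\Gamma_D}g\,\dd h$, and the perturbing measures $h$ are allowed to charge $\Gamma_D$ even though $\F(\mu+h)$ does not see $h\res\Gamma_D$. In the conjugate this produces $\sup_h\int_{\Gamma_D}(p-g)\,\dd h$, and testing with $h_n=n\,\mathrm{Sign}(p(x_0)-g(x_0))\,\delta_{x_0}$, $x_0\in\Gamma_D$, forces $p=g$ pointwise (Lemma \ref{Ltech}). Note that with your definition of $\mathcal H$ (no $g$-term) and the full perturbation space $\mathcal M_b(\overline\Omega)$, the $\Gamma_D$-part of $\lambda$ is invisible to both cost and constraint, so $\mathcal H^*(p)$ would even force $p=0$ on $\Gamma_D$, which is again not what you want. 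A secondary point: the weak-$*$ lower semicontinuity of $\mathcal H$ is not just ``closedness of the constraint''; the paper proves it by constructing explicit correctors $\psi_n$ (via an $s$-Laplacian auxiliary problem) that transfer admissibility from $\F(\mu+h_n)$ to $\F(\mu+h)$ with $\Vert\psi_n\Vert_{L^1}\to 0$ and vanishing trace pairing. Your final Ascoli--Arzel\`a argument for attainment of the maximum over $\G_\kk$ is fine.
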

			\begin{proof}
				We consider on $ \mathcal M_b(\overline \Omega)$  the following functional  $\mathcal H\: :\:  \mathcal M_b(\overline \Omega) \mapsto  ]-\infty,\infty]$ defined by 
				\[
				\mathcal H(h) = \inf_{\tau \phi\in    \F  (\mu+h)  } \left\{\tau \int_{{\Omega}}  \kk(x)\: \vert \Phi(x)\vert\dd x+    \int_{\Gamma_D} g\: \dd h -\tau   \int_{\Gamma_D}\Phi\cdot \bnu \: g\: \dd x  \right\}, 
				\]
				for any $h\in \mathcal M_b(\overline \Omega).$ 
				Then $	\mathcal H$  is convex and l.s.c. 
				\paragraph{\textbf{Convexity.}} Indeed, take $h_1,h_2 \in\mathcal{M}_{b}(\overline \Omega)$ and set $h := th_1 + (1-t)h_2$ for $t\in [0,1]$. Let $\Phi_{1,n},\Phi_{2,n}\in L^1(\Omega)^N  $ be two minimizing sequences of fluxes corresponding to $h_{1}$ and $h_{2}$ respectively, \ie $\tau\Phi_{1,n}\in \F  (\mu+h_1) $ and $\tau\Phi_{2,n}\in \F  (\mu+h_2) $  such that 
				$$	\mathcal H(h_{i}) =\tau \lim_{n} \int_{{\Omega}} \kk(x)\vert\Phi_{i,n}(x)\vert \dd x+\int_{\Gamma_D} g\: \dd h_i  - \tau \int_{\Gamma_D}  \Phi_{i,n}\cdot \bnu  \: g\: \dd x ~~\text{ for} ~~i=1,2.$$
				Set $\Phi_{n} = t\Phi_{1,n} + (1-t)\Phi_{2,n}$. We clearly see that $\phi_n$ are admissible for $h$ and
				\begin{align*}
					\mathcal H(h) &\leq \tau  \lim_{n} \int_{{\Omega}} \kk(x)\vert\Phi_{n}(x)\vert \dd x +\int_{\Gamma_D} g\: \dd h -  \tau  \int_{\Gamma_D}  \Phi_{n}\cdot \bnu  \: g\: dx \\
					& = \tau  \lim_{n}\int_{{\Omega}} \kk(x)\vert t\Phi_{1,n} + (1-t)\Phi_{2,n}\vert \dd x +\int_{\Gamma_D} g\: \dd h -  \tau  \int_{\Gamma_D} (t\Phi_{1,n}  + (1-t)\Phi_{2,n})\cdot   \bnu \: g\: \dd x  \\
					&\leq    \lim_{n} t\Big(\tau  \int_{{\Omega}} \kk(x)\vert\Phi_{1,n}\vert\dd x  +\int_{\Gamma_D} g\: \dd h -  \tau  \int_{\Gamma_D}   \Phi_{1,n}\cdot \bnu  \: g\: \dd x  \Big) \\  
					&  \hspace*{3cm}+    (1-t)\Big(\tau \int_{{\Omega}} \kk(x)\vert\Phi_{2,n}\vert\dd x+\int_{\Gamma_D} g\: \dd h   -\tau  \int_{\Gamma_D}\Phi\cdot \bnu \: g\: \dd x  \Big)\\
					&\leq t 	\mathcal H(h_1) + (1-t)	\mathcal H(h_2)
				\end{align*}
				and this proves convexity. 
				\paragraph{\textbf{Lower semicontinuity.}} Take a sequence $h_n\rightharpoonup h$ in $\mathcal M_b(\overline \Omega)$. For every $n\in\nn$, we consider a sequence $\tau (\Phi_{n}^{k})_{k\in\mathbb{N}}$ of     $   \F   (\mu+h_n)$  such that  
				$$	\mathcal H(h_n) = \tau  \lim_{k\to\infty} \int_{{\Omega}}	\kk(x)\vert\Phi_{n}^{k}(x)\vert \dd x  +  \int_{\Gamma_D} g\: \dd h_n -  \tau  \int_{\Gamma_D}   \Phi_{n}^{k}\cdot \bnu \: g\: \dd x.$$ 
				We may find some $\psi_{n}\in L^{1}({\Omega})^N$ satisfying 	
				\begin{equation}\label{psin}
					\tau\: 	\int_\Omega \psi_n\cdot \nabla \xi \: \dd x = \int_{ \overline  \Omega }(h-h_n)\: \xi\: \dd x, \quad \hbox{ for any }\xi\in \Lip(\Omega)\cap W^{1,2}_{\Gamma_D } (\Omega).
				\end{equation}
				and such that    $\Vert\psi_n\Vert_{L^1}\to 0$ and $ \langle \psi_{n}\cdot \bnu ,g\rangle\to 0.$  In fact, since  $W^{1,s}_{\Gamma_D}(\Omega) \hookrightarrow \mathcal C(\overline \Omega),$ for $s> N,$   one sees that the optimization problem  
				$$\min_{z\in W^{1,s}_{\Gamma_D}({\Omega})}  \left\{ \frac{1}{s}  \int_\Omega \vert \nabla z\vert^s\: \dd x -\int_{\overline \Omega} z\dd (h-h_n) \right\} $$ 
				has a unique solution that we denote   $u_n.$  We consider   $\psi_n := \vert \nabla u_n \vert^{s-2} \nabla u_n.$ It is clear that $\psi_n\in L^{s'} ( \Omega ),$ and then in $L^1(  {\Omega}).$   Moreover, using standard techniques of calculus of variation, we see that $ \psi_n $  satisfies \eqref{psin}.  
				Clearly  $u_n$ is bounded in $W^{1,s}_{\Gamma_D}( \Omega),$ so that by taking a subsequence if necessary, we have  $u_n\rightharpoonup u$ in  $W^{1,s}( \Omega) $ and uniformly in $\overline {\Omega}.$   This implies that 
				$$ \tau   \int_{ \Omega }\vert  \psi_n\vert ^{s'}\: \dd x =   \tau  \int_{ \Omega } \vert\nabla u_n\vert^{s} \dd x =  \int_{\overline \Omega} u_n\: \dd (h-h_n)\quad \underset{n\to\infty} {\longrightarrow } 0$$
				and then  $\vert  \psi_n\vert  \underset{n\to\infty} {\longrightarrow } 0 $ in $L^1({\Omega}).$  In addition,    thanks to \eqref{traceformula},   we have  
				$$  \tau \langle\psi_{n}\cdot \bnu  ,g\rangle =  \tau   \int_{  {\Omega}}  \psi_n\cdot \nabla \tilde g \: \dd x -  \int_{\Omega}  \tilde g \: \dd (h-h_n)  \quad \underset{n\to\infty} {\longrightarrow } 0 .$$  
				This being said, we clearly have $- \mathrm{div}(\Phi_{n}^{k} +\psi_n) = \mu+  h\res \Omega$ in  $ \Omega  $ and  $(\Phi_{n}^{k} +\psi_n)\cdot \bnu=\eta+   h\res \Gamma_N$ on $\Gamma_N$; \ie $  \tau  (\Phi_{n}^{k}+\psi_{n})\in \F  (\mu+h)$.  By semicontinuity of the integral, we have
				\begin{align*}
					\mathcal H(h) &\leq \tau  \int_{{\Omega}} \kk(x)\vert(\Phi_{n}^{k} + \psi_{n})(x)\vert \dd x  +  \int_{\Gamma_D} g\: \dd h -  \tau  \int_{\Gamma_D}   (\Phi_{n}^{k} + \psi_{n})\cdot \bnu \: g\: \dd x\\
					&\leq  \tau   \int_{{\Omega}}\kk(x)\vert\Phi_{n}^{k}(x)\vert\dd x + \int_{\Gamma_D} g\: \dd h_n -   \tau  \int_{\Gamma_D} \Phi_{n}^{k}\cdot \bnu  \: g\: \dd x  \\& \quad+  \tau  \int_{{\Omega}} \kk(x)\vert\psi_{n}(x)\vert\dd x  +  \int_{\Gamma_D} g\: \dd (h -h_n) -   \tau  \int_{\Gamma_D} \psi_{n}\cdot \bnu  \: g\: \dd x .
				\end{align*}
				Letting $k\to\infty$ we get
				\[
				\mathcal H(h) \leq	\mathcal H (h_n) +  \tau  \int_{{\Omega}} \kk(x)\vert\psi_{n}(x)\vert\dd x+\int_{\Gamma_D} g\: \dd(h-h_n)   -\tau   \int_{\Gamma_D} \psi_{n}\cdot \bnu  \: g\: \dd x.
				\]
				Now, letting $n\to\infty,$ and using the fact that $ \psi_{n} \to 0$ in $L^1({\Omega})^N,$ and $h_n \rightharpoonup h$ in $\mathcal{M}_{b}(\overline \Omega),$ as $n\to\infty$, we obtain the lower semicontinuity, \ie
				\[
				\mathcal H(h) \leq \liminf_{n} 	\mathcal H(h_n).
				\]
				Next let us compute $	\mathcal H^*$. For any  $p\in \CC(\overline \Omega),$ we have 
				\begin{eqnarray*}
					\mathcal H^{*}(p) &=& \sup_{h\in\mathcal{M}_{b}(\overline \Omega)} \left\{ \int_{\overline \Omega} p\dd h - \mathcal H(h)\right\} \\
					&=& \sup_{ h\in\mathcal{M}_{b}(\overline \Omega),\: \tau  \Phi\in\F  (\mu+h)}    \left\{ \int_{\overline \Omega} p \dd h - \tau   \int_{{\Omega}} \kk(x)\vert\Phi(x)\vert\dd x- \int_{\Gamma_D} g\: \dd h +  \tau  \int_{\Gamma_D}    \Phi\cdot \bnu \: g\: \dd x 
					\right\} \\
					&=& I_1(p) + I_2(p),
				\end{eqnarray*}
				where $I_1(p) := -\int_{\overline \Omega \setminus \Gamma_D} p\: \dd \mu   $ and $I_2(p)$  is given by 
				$$ \sup_{h\in\mathcal{M}_{b}(\overline \Omega),\: \tau  \Phi\in\F  (\mu+h)}    \left \{ 
				\int_{\overline \Omega\setminus \Gamma_D} p \dd  (\mu+h) -  \tau  \int_{{\Omega}} \kk(x)\: \vert \Phi(x)\vert \dd x    +  \int_{\Gamma_D} (p-g)\: \dd h   +  \tau  \int_{\Gamma_D}  \Phi\cdot \bnu \: g\: \dd x  \right\}. $$
				Using Lemma \ref{Ltech} below, we deduce that, for any $u\in \Lip(\Omega),$ we have  
				$$	\mathcal H^{*}(p) =  \left\{   \begin{array}{ll} 
					-\int_{\overline \Omega \setminus \Gamma_D} p\: \dd\mu    \quad &  ~~\text{if}~~u\in \G_\kk \\  \\  
					\infty & ~~\text{otherwise}.\end{array}\right. 
				$$ 
				Finally, using \eqref{perturbduality}  we deduce the result. 
			\end{proof}
			
			\begin{lemma}\label{Ltech}
				Let $p\in \Lip(\Omega),$ we have 	
				$$\sup_{\underset{ \tau  \Phi\in\F  (\mu+h)}{h\in\mathcal{M}_{b}(\overline \Omega)}}    \left\{\int_{\overline \Omega\setminus \Gamma_D} p \dd(\mu +h) -  \tau  \int_{{\Omega}} \kk(x) \vert \Phi(x)\vert \dd x    +  \int_{\Gamma_D}(p- g)\: \dd h   +  \tau    \int_{\Gamma_D}  \Phi\cdot \bnu \: g\: \dd x  \right\}
				=\left\{   \begin{array}{ll} 
					0  \quad &  ~~if~~p\in \G_\kk\\  \\  
					\infty & ~~\hbox{otherwise}.\end{array}\right.$$	 
			\end{lemma}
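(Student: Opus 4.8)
The plan is to reduce the supremum to two decoupled extremal problems — one in the interior, which detects the gradient constraint $|\nabla p|\le\kk$, and one on $\Gamma_D$, which detects the boundary matching $p=g$ — and then to read off the dichotomy from them. First I would use the constraint $\tau\Phi\in\F(\mu+h)$ to eliminate the measure $\mu+h$ from the objective, exploiting that $p\in\Lip(\Omega)$ is itself an admissible test direction. Applying the normal-trace identity \eqref{tracemes}--\eqref{traceformula} for $L^1$ fields with measure divergence (Chen--Frid) to $\tau\Phi$ tested against $p$ gives
$$\int_{\Ob\setminus\Gamma_D} p\,\dd(\mu+h) = \tau\int_\Omega\Phi\cdot\nabla p\,\dd x - \int_{\Gamma_N}\eta\,p\,\dd x - \tau\int_{\Gamma_D}\Phi\cdot\bnu\,p\,\dd x .$$
Substituting this into the objective, the two $\Gamma_D$ traces combine with the gainful term $\tau\int_{\Gamma_D}\Phi\cdot\bnu\,g$ into $-\tau\int_{\Gamma_D}\Phi\cdot\bnu\,(p-g)$, so that, up to a term $-\int_{\Gamma_N}\eta\,p$ fixed by the data on $\Gamma_N$ and independent of the supremum variables, the functional becomes
$$\tau\int_\Omega\bigl(\Phi\cdot\nabla p-\kk\,|\Phi|\bigr)\dd x+\int_{\Gamma_D}(p-g)\,\dd h-\tau\int_{\Gamma_D}\Phi\cdot\bnu\,(p-g)\,\dd x .$$

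Next I would observe that after this substitution the admissible pairs are essentially free: given any $\Phi\in L^1(\Omega)^N$ with $\dive\Phi\in\mathcal{M}_b(\Omega)$, one recovers an admissible $h$ by setting $h\res\Omega=-\tau\dive\Phi-\mu\res\Omega$ and fixing $h\res\Gamma_N$ through the prescribed normal trace on $\Gamma_N$, while $h\res\Gamma_D$ stays completely free. Hence the supremum separates. Over $h\res\Gamma_D$ one gets $\sup\int_{\Gamma_D}(p-g)\,\dd h$, which is finite (and $0$) if $p=g$ on $\Gamma_D$ and $+\infty$ otherwise, by concentrating a large signed mass where $p-g\neq0$. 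Over $\Phi$ one gets $\sup_\Phi\tau\int_\Omega\bigl(\Phi\cdot\nabla p-\kk\,|\Phi|\bigr)\dd x$; since the integrand is pointwise $\le0$ exactly when $|\nabla p|\le\kk$, this equals $0$ (attained at $\Phi\equiv0$) when $|\nabla p|\le\kk$ a.e., and $+\infty$ otherwise, by aligning $\Phi$ with $\nabla p$ and letting $|\Phi|\to\infty$ on the set where the constraint fails.

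Combining the two computations, the objective is finite precisely when both $p=g$ on $\Gamma_D$ and $|\nabla p|\le\kk$ a.e. in $\Omega$, i.e. when $p\in\G_\kk$, and it is $+\infty$ otherwise. In the finite case all the variable terms vanish, so the objective collapses to its value at the trivial admissible pair ($\Phi\equiv0$, $h\res\Gamma_D=0$), which yields the claimed value $0$; this also shows the supremum is attained. This is exactly the asserted dichotomy, and the conclusion of Proposition \ref{phj1} then follows from it together with \eqref{perturbduality}.

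The main obstacle is the rigorous handling of the normal trace on $\Gamma_D$: since $\Phi$ is only $L^1$ with measure divergence, $\Phi\cdot\bnu$ is merely a continuous functional on $\Lip(\partial\Omega)$, so the substitution above and the pairing $\int_{\Gamma_D}\Phi\cdot\bnu\,(p-g)$ must be justified through \eqref{tracemes}--\eqref{traceformula} and the existence of the $C^1$ extension $\tilde g$ granted by (H1). One must also make the two blow-up constructions genuinely admissible while respecting the coupling between $\Phi$ in $\Omega$ and its trace on $\Gamma_D$; treating first the boundary supremum (which forces $p=g$ on $\Gamma_D$, killing the $\Gamma_D$ flux term) cleanly decouples the interior problem from the boundary one and makes the interior blow-up unobstructed.
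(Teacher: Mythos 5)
Your proposal is correct and follows essentially the same route as the paper: test the divergence constraint against $p$ to rewrite the objective as $\tau\int_\Omega(\Phi\cdot\nabla p-\kk|\Phi|)\,\dd x+\int_{\Gamma_D}(p-g)\,\dd h-\tau\int_{\Gamma_D}\Phi\cdot\bnu\,(p-g)\,\dd x$, observe it is $\le 0$ with value $0$ at the trivial pair when $p\in\G_\kk$, and otherwise blow up either with Dirac masses on $\Gamma_D$ where $p\neq g$ or with (mollified) fields aligned with $\nabla p$ on the set where $|\nabla p|>\kk$. The only substantive difference is your explicit bookkeeping of the $\Gamma_N$ term $\int_{\Gamma_N}\eta\,p$, which the paper's computation suppresses; otherwise the decomposition and both blow-up constructions coincide with the paper's.
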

			\begin{proof} Take $p$ as a test function in the divergence constraint $- \tau \mathrm{div}(\Phi) = \mu +h~\mbox{in}~\mathcal{D}^{'}({\overline \Omega\setminus  \Gamma_D })$, we get 
				\begin{equation*}
					\begin{aligned} I(h, \Phi)&:=\int_{\overline \Omega\setminus \Gamma_D} p  \:  \dd(\mu +h) - \tau  \int_{{\Omega}} \kk(x)\vert\Phi(x)\vert\:  \dd x    +  \int_{\Gamma_D}(p- g)\: \dd h   +  \tau \int_{\Gamma_D}  \Phi\cdot \bnu \: g\: \dd x  \\
						&=\tau \int_{\Omega} \nabla p \cdot \Phi  \: \dd x- \tau \int_{{\Omega}} \kk(x)\vert \Phi(x)\vert  \:  \dd x    + \int_{\Gamma_D} (p- g)\: \dd h   +\tau \int_{\Gamma_D}  \Phi\cdot \bnu \: (g-p)\: \dd x .
					\end{aligned}
				\end{equation*}
				It is clear that for any  $   p\in \G_\kk,$ we have $I(h, \Phi)\le 0$, and by taking $h\equiv-\mu $ and $\Phi\equiv 0$ we obtain 
				$\sup I(h, \Phi)=0. $   For the case where  $p\not\equiv g$ on $\Gamma_D $; \ie  $p (x_0)\neq g (x_0)$  for some $x_0\in \Gamma_D,$    one can work with   $h_n=n \:  \hbox{Sign} (p(x_0)-g(x_0))\delta_{x_0}$ for $n\in\nn$, where  $\delta_{x_0}$ is Dirac mass at $x_0$,  and fix any $\Phi_0\in\F  (\mu+h)$ such that $-\hbox{div }\Phi_0 = \mu $ in $\F  (\mu+h)$,  to see that   $I(h_n, \Phi)\longrightarrow \infty, $ as  ${n\to\infty}  .$  Now, 
				for the remaining case, \ie $p=g$ on $\Gamma_D$ and  $\vert \nabla p\vert > \kk $    on a   subset $A\subset {\Omega}$ such that $\vert A\vert \neq 0,$        we consider 
				$  \Phi_{n\epsilon}=n\: (\nabla p\:  \chi_A )*  \eta_\epsilon  ,$ where $\eta_\epsilon$ is a sequence of    mollifiers.  It is clear that there exists $h\in \mathcal M_b(\overline \Omega),$ such that $-\hbox{div }  \Phi_{n\epsilon} =\mu +h$ in $\D'(\overline \Omega\setminus \Gamma_D).$  Moreover, for any $n,$ we have 
				$$\sup I(h, \phi) \ge  \tau \int_{\Omega} \Phi_{n\epsilon} \cdot \nabla u  - \tau \int_{\Omega}\kk(x)\vert \nabla p (x)\vert \: \dd x. $$
				Letting $\epsilon \to 0,$ we get 
				$$\sup I(h, \Phi)\ge n \tau \underbrace{ \int_{A} \left(   \vert\nabla p(x)\vert^2  -   \kk(x)\: \vert \nabla p(x)\vert \right) \dd x}_{>0}   \longrightarrow \infty, \quad \hbox{ as }{n\to\infty}  .$$
				This concludes the proof. 
			\end{proof}

			\bigskip 
			\begin{proof}[Proof of Lemma \ref{phj}] Now, the proof is a simple consequence of Proposition \ref{phj1}.
			\end{proof}

		\end{appendices}
		
		
		\bibliographystyle{abbrv}
		\bibliography{cm}

	\end{document}